\numberwithin{equation}{section}
\theoremstyle{definition}\newtheorem{definition}{Definition}
\theoremstyle{plain}\newtheorem{theorem}[definition]{Theorem}
\theoremstyle{plain}\newtheorem{proposition}[definition]{Proposition}
\theoremstyle{plain}\newtheorem{corollary}[definition]{Corollary}
\theoremstyle{plain}\newtheorem{lemma}[definition]{Lemma}
\theoremstyle{definition}
\theoremstyle{definition}\newtheorem{condition}[definition]{Condition}
\theoremstyle{definition}
\theoremstyle{definition}
\newcommand{\cO}{{\mathcal O}}
\newcommand{\cR}{{\mathcal R}}
\newcommand{\cN}{{\mathcal N}}
\DeclareMathOperator*{\sgn}{sgn}
\newcommand{\bbN}{\mathbb{N}}
\newcommand{\la}{\langle}
\newcommand{\ra}{\rangle}
\newcommand{\N}{\mathbb{N}}
\begin{document}

\title{Injectivity and weak*-to-weak continuity suffice for convergence rates in $\ell^1$-regularization}

\author{
\textsc{Jens Flemming} and \textsc{Daniel Gerth}
\footnote{Chemnitz University of Technology,
Faculty of Mathematics, D-09107 Chemnitz, Germany,
jens.flemming@mathematik.tu-chemnitz.de, daniel.gerth@mathematik.tu-chemnitz.de.}
}

\date{\today\\~\\
\small\textbf{Key words:} linear ill-posed problem, sparsity promoting regularization, Tikhonov regularization, source condition, variational source condition, convergence rates\\
~\\
\textbf{MSC:} 65J20, 47A52}

\maketitle

\begin{abstract}
We show that the convergence rate of $\ell^1$-regularization for linear ill-posed equations is always $\cO(\delta)$ if the exact solution is sparse and if the considered operator is injective and weak*-to-weak continuous.
Under the same assumptions convergence rates in case of non-sparse solutions are proven.
The results base on the fact that certain source-type conditions used in the literature for proving convergence rates are automatically satisfied.
\end{abstract}

\section{Setting and main theorem}

Let $A:\ell^1\rightarrow Y$ be a bounded linear operator mapping absolutely summable real sequences into a real Banach space $Y$.
For solving the possibly ill-posed equation
\begin{equation}\label{eq:problem}
Ax=y^\dagger,\quad x\in\ell^1,
\end{equation}
we consider $\ell^1$-regularization.
That is, given noisy data $y^\delta$ in $Y$ with
\begin{equation*}
\|y^\delta-y^\dagger\|_Y\leq\delta
\end{equation*}
for some positive $\delta$, we solve
\begin{equation}\label{eq:tikh}
T_\alpha^\delta(x):=\|Ax-y^\delta\|_Y^p+\alpha\|x\|_{\ell^1}\to\min_{x\in\ell^1}.
\end{equation}
Here $\alpha>0$ is the regularization parameter controlling the influence of the penalty term and $p>1$ is some exponent which can be used to simplify numerical minimization.

By $x_\alpha^\delta$ we denote a minimizer of $T_\alpha^\delta$. Throughout this article we assume that \eqref{eq:problem} has a solution $x^\dagger$ in $\ell^1$ and the aim is to find asymptotic estimates (convergence rates) for the solution error $\|x_\alpha^\delta-x^\dagger\|_{\ell^1}$ in terms of the noise level $\delta$.
To ensure convergence of $x_\alpha^\delta$ to $x^\dagger$ we have to choose $\alpha$ in the right way depending on $\delta$ and $y^\delta$. In the following we restrict our attention to \emph{a priori} choices $\alpha=\alpha(\delta)$ and to the \emph{discrepancy principle}
\begin{equation}\label{eq:dp}
\delta\leq\|Ax_\alpha^\delta-y^\delta\|_Y\leq\tau\delta
\end{equation}
with $\tau\geq 1$. The later means that $\alpha=\alpha(\delta,y^\delta)$ is chosen such that the corresponding discrepancy is close to $\delta$. Both parameter choice methods are well-known and we refer to \cite{EngHanNeu96,SchKalHofKaz12} for details.

Since $\ell^1$ is the dual space of $c_0$, the space of sequences converging to zero, we have the notion of weak* convergence in $\ell^1$ at hand. If $A$ is sequentially weak*-to-weak continuous, then $T_\alpha^\delta$ has minimizers and $\ell^1$-regularization \eqref{eq:tikh} is a stable and convergent method. In addition, the minimizers are sparse, that is they have only finitely many non-zero components. These results can be found, e.\,g., in \cite[Section~2]{Fle16}.

Convergence rates for $\ell^1$-regularization in infinite dimensions were obtained at first in \cite[Proposition~4.7]{DauDefDem04} based on smoothing properties of $A$. In \cite{BreLor09} rates were obtained if the canonical basis of $\ell^1$ belongs to the range of the adjoint $A^\ast$ and in \cite{Gra10c} rates were shown for more general penalties in the Tikhonov functional based only on an injectivity-type assumption, but $\ell^q$-regularization is only covered if $q<1$. Further rates results can be found in \cite{GraHalSch11,Lor08} based on a Banach space source condition.
The mentioned convergence rates results only hold if the exact solution $x^\dagger$ is sparse.
First rates results for non-sparse solutions were presented in \cite{BurFleHof13} with the same range condition for the canonical basis as in \cite{BreLor09}. Under weaker assumptions same results were proven in \cite{FleHeg14,FleHofVes16}.

Now we state our main result which shows that next to injectivity and sequential weak*-to-weak continuity of $A$ no further assumptions like source conditions are needed to prove convergence rates for sparse (see corollary below) and non-sparse solutions.
In fact, sequential weak*-to-weak continuity does not restrict the scope of application because to our best knowledge this property is the weakest assumption ensuring existence of regularized solutions as well as stability and convergence of $\ell^1$-regularization (cf.\ \cite{Fle16,SchKalHofKaz12}).
Thus, the only essential assumption is injectivity of $A$.

\begin{theorem}\label{th:main}
Let $A:\ell^1\rightarrow Y$ be an injective and sequentially weak*-to-weak continuous bounded linear operator and denote by $x^\dagger\in\ell^1$ the solution of \eqref{eq:problem}.
Then there are a continuous, concave and monotonically increasing function $\varphi:[0,\infty)\rightarrow[0,\infty)$ with $\varphi(0)=0$ and a constant $c$ such that
\begin{equation*}
\|x_\alpha^\delta-x^\dagger\|_{\ell^1}\leq c\varphi(\delta)\qquad\text{for all }\delta>0,
\end{equation*}
if the regularization parameter $\alpha$ is chosen a priori $\alpha\sim\frac{\delta^p}{\varphi(\delta)}$ or by the discrepancy principle \eqref{eq:dp}.
Further, there is always a monotonically increasing sequence $(\gamma_n)_{n\in\bbN}$ of positive numbers such that $\varphi$ can be chosen
\begin{equation}\label{eq:phi}
\varphi(t):=2\inf_{n\in\bbN}\left(\sum_{k=n+1}^\infty\vert x^\dagger_k\vert+\gamma_n t\right).
\end{equation}
The constant $c$ is independent of $x^\dagger$ and $\varphi$.
\end{theorem}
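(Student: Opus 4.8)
The plan is to reduce the assertion to a variational source condition and then to establish that condition from the two assumptions on $A$. Write $P_n\colon\ell^1\to\ell^1$ for the projection onto the first $n$ coordinates and put $r_n:=\sum_{k=n+1}^\infty|x^\dagger_k|$, so that $r_n\to 0$ because $x^\dagger\in\ell^1$. The first step is to recall (e.g.\ from \cite{Fle16,SchKalHofKaz12}) that a variational inequality
\begin{equation*}
\beta\,\|x-x^\dagger\|_{\ell^1}\leq\|x\|_{\ell^1}-\|x^\dagger\|_{\ell^1}+\varphi\bigl(\|Ax-Ax^\dagger\|_Y\bigr)\qquad\text{for all }x\in\ell^1
\end{equation*}
with some $\beta\in(0,1]$ and a concave, monotonically increasing $\varphi$ with $\varphi(0)=0$ already yields $\|x_\alpha^\delta-x^\dagger\|_{\ell^1}\leq c\,\varphi(\delta)$ both for the a priori choice $\alpha\sim\delta^p/\varphi(\delta)$ and for the discrepancy principle \eqref{eq:dp}, with $c$ depending only on $\beta$, $p$ and $\tau$. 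Everything then reduces to producing such an inequality with $\varphi$ of the form \eqref{eq:phi}.

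The crucial point --- and the only place where injectivity and weak*-to-weak continuity enter --- is the following: for every $n\in\bbN$ and every $\varepsilon>0$ there is a finite constant $\gamma_{n,\varepsilon}$ with
\begin{equation*}
\|P_nx\|_{\ell^1}\leq\gamma_{n,\varepsilon}\,\|Ax\|_Y+\varepsilon\,\|x\|_{\ell^1}\qquad\text{for all }x\in\ell^1.
\end{equation*}
I would prove this by contradiction. If it fails, then by homogeneity one finds $x^{(j)}$ with $\|P_nx^{(j)}\|_{\ell^1}=1$ and $1>j\,\|Ax^{(j)}\|_Y+\varepsilon\,\|x^{(j)}\|_{\ell^1}$; hence $\|x^{(j)}\|_{\ell^1}<1/\varepsilon$ is bounded in $\ell^1=(c_0)^\ast$ and $\|Ax^{(j)}\|_Y\to 0$. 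Passing to a weak*-convergent subsequence $x^{(j)}\rightharpoonup^\ast x$, weak*-to-weak continuity gives $Ax^{(j)}\rightharpoonup Ax$, so $Ax=0$ and therefore $x=0$ by injectivity; but $P_n$ has finite-dimensional range, so $P_nx^{(j)}\to P_nx=0$ in norm, contradicting $\|P_nx^{(j)}\|_{\ell^1}=1$. The correction term $\varepsilon\|x\|_{\ell^1}$ is genuinely needed here: the naive bound $\|P_nx\|_{\ell^1}\leq\gamma_n\|Ax\|_Y$ fails in general for injective, weak*-to-weak continuous $A$ (as simple examples on $Y=\ell^2$ show), and it is exactly this term that produces the numerical factor in \eqref{eq:phi}. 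Since $\|P_nx\|_{\ell^1}\leq\|P_{n+1}x\|_{\ell^1}$, we may moreover take $n\mapsto\gamma_{n,\varepsilon}$ monotonically increasing.

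The two estimates are then combined via an elementary splitting of the penalty difference. For $e:=x-x^\dagger$, separating the coordinates $k\leq n$ from $k>n$ and applying the reverse triangle inequality coordinatewise gives
\begin{equation*}
\|x\|_{\ell^1}-\|x^\dagger\|_{\ell^1}\geq\|e\|_{\ell^1}-2\|P_ne\|_{\ell^1}-2r_n,
\end{equation*}
that is, $\|e\|_{\ell^1}\leq\|x\|_{\ell^1}-\|x^\dagger\|_{\ell^1}+2\|P_ne\|_{\ell^1}+2r_n$. Inserting the key estimate applied to $e$ with $\varepsilon=\tfrac14$, absorbing the resulting $\tfrac12\|e\|_{\ell^1}$ into the left-hand side, and then taking the infimum over $n\in\bbN$ yields
\begin{equation*}
\tfrac12\,\|x-x^\dagger\|_{\ell^1}\leq\|x\|_{\ell^1}-\|x^\dagger\|_{\ell^1}+2\inf_{n\in\bbN}\Bigl(r_n+\gamma_{n,1/4}\,\|Ax-Ax^\dagger\|_Y\Bigr),
\end{equation*}
which is the desired variational inequality with $\beta=\tfrac12$, $\gamma_n:=\gamma_{n,1/4}$ and $\varphi$ exactly as in \eqref{eq:phi}. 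It remains to verify the qualitative properties of this $\varphi$: being an infimum of affine functions with nonnegative slopes $2\gamma_n$ and nonnegative intercepts $2r_n$ it is concave and monotonically increasing, $\varphi(0)=2\inf_n r_n=0$, and $r_n\to 0$ forces $\varphi(0^+)=0$, so together with concavity $\varphi$ is continuous on $[0,\infty)$; the constant $c$ produced in the first step depends only on $\beta=\tfrac12$, $p$ and $\tau$, hence not on $x^\dagger$ or $\varphi$. The main obstacle in all of this is the key estimate --- recognising that its naive version is false and that the extra term $\varepsilon\|x\|_{\ell^1}$ both survives the weak*-compactness argument and disappears harmlessly afterwards --- whereas the passage from the variational inequality to the rate for both parameter choices is standard and can be quoted.
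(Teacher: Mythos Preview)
Your argument is correct and reaches the same variational source condition \eqref{eq:vsc} and hence the same rate, but by a genuinely different route. The paper works on the dual side: it first shows $\overline{\cR(A^\ast)}=c_0$ via the decomposition $(\ell^\infty)^\ast=\ell^1\oplus c_0^\perp$ and annihilator identities, then proves an approximation lemma (any $\xi\in c_0$ can be matched in $\cR(A^\ast)$ exactly on the first $n$ coordinates and up to $\mu$ on the rest), which verifies a source-type condition on $A^\ast$; the estimate $\|P_n(x-x^\dagger)\|_{\ell^1}\leq\mu\|(I-P_n)(x-x^\dagger)\|_{\ell^1}+\gamma_n\|Ax-Ax^\dagger\|_Y$ then follows by dual pairing. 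You instead obtain essentially the same inequality directly on the primal side, via a weak*-compactness contradiction argument, with $\varepsilon\|x\|_{\ell^1}$ playing the role of $\mu\|(I-P_n)(x-x^\dagger)\|_{\ell^1}$. Your path is shorter and more elementary---it bypasses the structure of $(\ell^\infty)^\ast$ entirely---while the paper's path yields extra structural information (the equivalence of injectivity, $\overline{\cR(A^\ast)}=c_0$, $e^{(k)}\in\overline{\cR(A^\ast)}$, and the dual source condition) that connects the result to the earlier literature on range conditions. In both cases the constants $\gamma_n$ are obtained non-constructively, and both land on $\varphi$ of the form \eqref{eq:phi} with some fixed $\beta\in(0,1)$ (the paper's $\beta=(1-\mu)/(1+\mu)$ versus your $\beta=\tfrac12$), after which the passage to the rate is standard.
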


The proof will be given in Sections~\ref{sc:proof} and \ref{sc:proof2}, where also the constant $c$ is made explicit.
In case of sparse solutions the theorem specializes to the following result.

\begin{corollary}
Let $A:\ell^1\rightarrow Y$ be an injective and sequentially weak*-to-weak continuous bounded linear operator and denote by $x^\dagger\in\ell^1$ the solution of \eqref{eq:problem}.
If $x^\dagger$ is sparse, then there is a constant $c$ such that
\begin{equation*}
\|x_\alpha^\delta-x^\dagger\|_{\ell^1}\leq c\delta\qquad\text{for all }\delta>0,
\end{equation*}
if the regularization parameter $\alpha$ is chosen a priori $\alpha\sim\delta^{p-1}$ or by the discrepancy principle \eqref{eq:dp}.
The constant $c$ depends on the number of non-vanishing components of $x^\dagger$.
\end{corollary}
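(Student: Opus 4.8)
The plan is to deduce the corollary directly from Theorem~\ref{th:main}, using the explicit form \eqref{eq:phi} of the rate function. Since $x^\dagger$ is sparse, there is an index $N\in\bbN$ with $x^\dagger_k=0$ for all $k>N$, so that the tail sum $\sum_{k=n+1}^\infty\vert x^\dagger_k\vert$ vanishes for every $n\geq N$.

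I would first invoke Theorem~\ref{th:main} to obtain a monotonically increasing sequence $(\gamma_n)_{n\in\bbN}$ of positive numbers and the associated function $\varphi$ from \eqref{eq:phi}. Evaluating the infimum at $n=N$ gives the upper bound $\varphi(t)\leq 2\gamma_N t$ for all $t\geq 0$, since the tail contribution is zero there. For the matching lower bound, drop the non-negative tail sums and use $\gamma_n\geq\gamma_1$ for all $n$: every term in the infimum is at least $\gamma_1 t$, hence $\varphi(t)\geq 2\gamma_1 t$. Thus $\varphi(\delta)$ is comparable to $\delta$, uniformly in $\delta>0$, with comparison constants $2\gamma_1$ and $2\gamma_N$.

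It then remains to transfer the two parameter choices. For the discrepancy principle there is nothing to do: that choice does not involve $\varphi$, so Theorem~\ref{th:main} already yields $\|x_\alpha^\delta-x^\dagger\|_{\ell^1}\leq c\varphi(\delta)\leq 2c\gamma_N\delta$. For the a priori choice, the two-sided estimate $2\gamma_1\delta\leq\varphi(\delta)\leq 2\gamma_N\delta$ shows that $\alpha\sim\delta^p/\varphi(\delta)$ is the same requirement as $\alpha\sim\delta^p/\delta=\delta^{p-1}$, and feeding this back into the theorem again produces the linear rate with constant $2c\gamma_N$, where $c$ is the (universal) constant from Theorem~\ref{th:main}.

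The only point that genuinely requires care is the claim that the constant depends solely on the number of non-vanishing components of $x^\dagger$: one has to trace the construction of $(\gamma_n)$ inside the proof of Theorem~\ref{th:main} and verify that $\gamma_N$ can be controlled in terms of that number (equivalently, that the relevant index may be taken to be the sparsity level rather than the largest index in the support). I expect this bookkeeping to be the main obstacle; everything else is an immediate specialization of the main theorem.
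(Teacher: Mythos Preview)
Your argument is essentially the paper's: choose $N$ (the paper writes $m$) with $x^\dagger_k=0$ for $k>N$, plug $n=N$ into \eqref{eq:phi} to get $\varphi(t)\leq 2\gamma_N t$, and read off the linear rate $\|x_\alpha^\delta-x^\dagger\|_{\ell^1}\leq 2c\gamma_N\delta$. The paper's proof stops there. Your additional lower bound $\varphi(t)\geq 2\gamma_1 t$ is not in the paper but is in fact needed to convert the a priori choice $\alpha\sim\delta^p/\varphi(\delta)$ of Theorem~\ref{th:main} into the stated choice $\alpha\sim\delta^{p-1}$; so on this point you are more careful than the original.

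Your concern in the last paragraph is legitimate but is not a gap in \emph{your} argument---it is a slight imprecision in the corollary's wording. The paper's own proof produces the constant $2c\gamma_m$ with $m$ the largest index in the support of $x^\dagger$, not the number of non-zero entries; the phrase ``depends on the number of non-vanishing components'' should therefore be read informally (or via the reordering-by-size device the authors mention in their closing remarks). You do not need to trace anything further inside the construction of $(\gamma_n)$: accepting the largest-support-index interpretation, the bookkeeping is already complete.
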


\begin{proof}
Let $x^\dagger_k=0$ for $k>m$.
Then $\varphi$ in Theorem~\ref{th:main} satisfies
\begin{equation*}
\varphi(t)\leq 2\sum_{k=m+1}^\infty\vert x^\dagger_k\vert+2\gamma_m t=2\gamma_m t
\end{equation*}
and the corresponding error estimate reduces to
\begin{equation*}
\|x_\alpha^\delta-x^\dagger\|_{\ell^1}\leq 2c\gamma_m\delta.
\end{equation*}
\end{proof}

In the next section we discuss the relation of the range of $A^\ast$ to smoothness properties of basis elements. Then we go on to the proof of our main theorem in Sections~\ref{sc:proof} and \ref{sc:proof2}.

\section{The range of $A^\ast$ and basis smoothness}\label{sc:basis}

Denote by $\ell^\infty=(\ell^1)^\ast$ the space of bounded sequences and by $A^\ast:Y^\ast\rightarrow\ell^\infty$ the adjoint of $A$.
The range $\cR(A^\ast)$ of $A^\ast$ played a crucial role in several results on convergence rates for $\ell^1$-regularization and in the present section we discuss some known results on its structure from the literature.

Denoting by $(e^{(k)})_{k\in\bbN}$ the canonical basis of $\ell^1$, the following lemma has been proven in \cite{Fle16}.

\begin{lemma}
The following assertions are equivalent:
\begin{itemize}
\item[(i)]
$A$ is sequentially weak*-to-weak continuous.
\item[(ii)]
$\cR(A^\ast)\subseteq c_0$.
\item[(iii)]
$(A\,e^{(k)})_{k\in\bbN}$ converges weakly to zero.
\end{itemize}
\end{lemma}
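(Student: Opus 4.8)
The plan is to establish the cycle (i) $\Rightarrow$ (iii) $\Rightarrow$ (ii) $\Rightarrow$ (i); all three steps are short once one records the basic fact that the canonical basis converges weak* to zero in $\ell^1=(c_0)^\ast$. Indeed, for any $u\in c_0$ the pairing $\langle u,e^{(k)}\rangle$ equals the $k$-th coordinate $u_k$, which tends to $0$ as $k\to\infty$; hence $e^{(k)}\rightharpoonup^{\ast}0$.

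For (i) $\Rightarrow$ (iii) I would simply feed $e^{(k)}\rightharpoonup^{\ast}0$ into the assumed sequential weak*-to-weak continuity of $A$, which yields $A\,e^{(k)}\rightharpoonup A\,0=0$ in $Y$. For the equivalence of (iii) and (ii) the bridge is the identity $(A^\ast\eta)_k=\langle A^\ast\eta,e^{(k)}\rangle=\langle\eta,A\,e^{(k)}\rangle$, which holds for every $\eta\in Y^\ast$ and every $k\in\bbN$. Reading it one way, $(A\,e^{(k)})_{k}$ converges weakly to zero exactly when $\langle\eta,A\,e^{(k)}\rangle\to 0$ for all $\eta$, i.e.\ when every sequence $A^\ast\eta$ lies in $c_0$; this is $\cR(A^\ast)\subseteq c_0$. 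So (ii) and (iii) are in fact equivalent, and in particular (iii) $\Rightarrow$ (ii).

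Finally, for (ii) $\Rightarrow$ (i) I would take an arbitrary weak* convergent sequence $x^{(n)}\rightharpoonup^{\ast}x$ in $\ell^1$ and an arbitrary $\eta\in Y^\ast$. Since $\cR(A^\ast)\subseteq c_0$ by hypothesis, $A^\ast\eta$ is an admissible test element for weak* convergence, so $\langle\eta,A\,x^{(n)}\rangle=\langle A^\ast\eta,x^{(n)}\rangle\to\langle A^\ast\eta,x\rangle=\langle\eta,A\,x\rangle$; as $\eta$ ranges over $Y^\ast$ this says $A\,x^{(n)}\rightharpoonup A\,x$, i.e.\ $A$ is sequentially weak*-to-weak continuous.

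I do not expect a genuine obstacle here: the proof is essentially bookkeeping with the duality pairings $(\ell^1,c_0)$ and $(Y,Y^\ast)$ together with the definition of the adjoint. The only places that merit a second of attention are the correct identification $\ell^1\cong(c_0)^\ast$ (so that ``weak*'' really is the relevant topology and the basis tends to zero in it) and the observation that, because only \emph{sequential} continuity is claimed, one never needs nets or a Banach--Steinhaus boundedness argument for the sequences $x^{(n)}$.
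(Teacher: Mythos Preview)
Your argument is correct in every step: the canonical basis indeed converges weak* to zero in $\ell^1=(c_0)^\ast$, the identity $(A^\ast\eta)_k=\langle\eta,A\,e^{(k)}\rangle$ is the exact bridge between (ii) and (iii), and for (ii)$\Rightarrow$(i) the key point that $A^\ast\eta\in c_0$ makes it an admissible test element for weak* convergence is precisely what is needed. The paper itself does not prove this lemma but simply cites \cite[Lemma~2.1]{Fle16}; your self-contained cycle (i)$\Rightarrow$(iii)$\Rightarrow$(ii)$\Rightarrow$(i) is the natural direct argument and is entirely adequate.
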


\begin{proof}
See \cite[Lemma~2.1]{Fle16}.
\end{proof}

On the one hand the lemma shows that sequential weak*-to-weak continuity can be reformulated as a property of the range $\cR(A^\ast)$.
On the other hand, item (iii) in the lemma is obviously satisfied if $A$ has a bounded extension to some $\ell^q$-space with $q>1$.
Thus, restriction to sequentially weak*-to-weak continuous operators is a very weak restriction. But note that for $Y=\ell^1$ the identity mapping is a simple example of a not sequentially weak*-to-weak continuous operator.


First results on convergence rates for $\ell^1$-regularization if solutions are not sparse in \cite{BurFleHof13} were based on the assumption that
\begin{equation}\label{eq:range_inclusion}
e^{(k)}\in\cR(A^\ast)\qquad\text{for all }k\in\bbN.
\end{equation}
To our best knowledge such a condition appeared first for nonlinear operators in \cite{Gra09}.
In the present paper we show that the slightly weaker condition
\begin{equation}\label{eq:range_inclusion_closure}
e^{(k)}\in\overline{\cR(A^\ast)}\qquad\text{for all }k\in\bbN
\end{equation}
automatically holds for all injective, sequentially weak*-to-weak continuous operators and suffices, in combination with sequential weak*-to-weak continuity, to obtain convergence rates.
Here and in the whole paper an overlined subset of $\ell^\infty$ denotes the closure of this set with respect to the $\ell^\infty$-norm.

In \cite[Proposition~2.4]{BurFleHof13} it has been observed that $\overline{\cR(A^\ast)}=c_0$ if $\|Ae^{(k)}\|_Y$ converges to zero.
The same equality will be obtained in the present paper, but under the weaker assumption that $A$ is weak*-to-weak continuous.

Typically one has a decomposition
\[
A=\tilde{A}\circ L
\]
where $\tilde{A}:\tilde{X}\rightarrow Y$ maps from some Banach space $\tilde{X}$ into $Y$ and $L:\ell^1\rightarrow\tilde{X}$ is a synthesis operator with respect to some Schauder basis $(v_k)_{k\in\bbN}$, that is,
\[
Lx:=\sum_{k=1}^\infty x_k v_k.
\]
In practice $\tilde{X}$ is often a Hilbert space and $(v_k)_{k\in\bbN}$ is an orthonormal basis.
Then it is easy to see that \eqref{eq:range_inclusion} holds if and only if $v_k\in\cR(\tilde{A}^\ast)$ for all $k$.
If $\tilde{X}=\ell^p$ a similar result can be obtained and for a general Banach space $\tilde{X}$ one has to switch to biorthogonal systems in $\tilde{X}$ and $\tilde{X}^\ast$.

It had not been clear whether condition \eqref{eq:range_inclusion} holds for larger classes of operators.
An affirmative was given in \cite{AnzHofRam13} in the case that $\tilde{X}$ is a Hilbert space with a separable and dense linear subspace $V$, such that $(V,\tilde{X},V^\ast)$ forms a Gelfand triple. Among other examples it was shown that the Radon transform possesses this property, showing that a large class of in particular practical problems fulfill \eqref{eq:range_inclusion}.

On the other hand, a negative example was constructed in \cite{FleHeg14} showing that already a certain bidiagonal operator does not fulfill \eqref{eq:range_inclusion}. In order to overcome this deficiency, a weaker assumption for obtaining convergence rates was introduced which in principle states that there are  elements $\eta\in Y^\ast$ such that each basis element $e^{(k)}$ can be approximated via $A^\ast\eta$ with $[A^\ast\eta]_l=e^{(k)}_l$ for $l\leq k$ and $[A^\ast\eta]_l$ sufficiently small for all $l>k$.
For proving our main theorem we shall employ a very similar condition which in turn is a variant of an assumption that was used in \cite{FleHofVes16}, implying the one in \cite{FleHeg14}. In order to formulate it we introduce the projectors
\begin{equation}
P_n:\ell^\infty\rightarrow \ell^\infty,\quad P_nx:=(x_1,\ldots,x_n,0,\ldots)
\end{equation}
as the cut-off after the $n$-th entry.

\begin{condition}\label{ass2}
There exist a real sequence $(\gamma_n)_{n\in\bbN}$ and a constant $\mu\in[0,1)$ such that for each $n\in\bbN$ and each $\xi\in\ell^\infty$ with
\begin{equation*}
\xi_k\begin{cases}\in\{-1,0,1\},&\text{if }k\leq n,\\=0,&\text{if }k>n\end{cases}
\end{equation*}
there exists some $\eta=\eta(\mu,n,\xi)$ in $Y^\ast$ such that
\begin{itemize}
\item[(i)]
$P_nA^\ast\eta=\xi$,
\item[(ii)]
$\vert[(I-P_n)A^\ast\eta]_k\vert\leq\mu$\quad for all $k>n$,
\item[(iii)]
$\|\eta\|_{Y^\ast}\leq\gamma_n$.
\end{itemize}
\end{condition}

The existence of $\eta=\eta(\xi)$ in the condition depends heavily on the constants $\mu$ and $\gamma_n$. The interplay of these two constants seems to be rather complicated, but the proof of the main theorem will show that fixing some $\mu\in(0,1)$ one always finds a sequence $(\gamma_n)_{n\in\bbN}$ such that the condition holds. The growth of the $\gamma_n$ has influence on the convergence rate of $\ell^1$-regularization and it is not clear how to choose $\mu$ to make this growth as slow as possible.

\section{Proof part I: variational source condition}\label{sc:proof}

To prove Theorem~\ref{th:main} we start with the properties of $\varphi$ defined by \eqref{eq:phi}.
As an infimum of affine functions it is concave and upper semi-continuous. Concavity implies continuity on $(0,\infty)$ and from $\varphi(0)=0$, non-negativity and upper semi-continuity we obtain continuity of $\varphi$ on $[0,\infty)$. Monotonicity of $\varphi$ follows from monotonicity of $(\gamma_n)_{n\in\bbN}$.

The major part of the proof of Theorem~\ref{th:main} is to show that Condition~\ref{ass2} holds. Then we can refer to \cite[Theorem~2.2]{FleHofVes16} to obtain a variational source condition (or variational inequality)
\begin{equation}\label{eq:vsc}
\beta\|x-x^\dagger\|_{\ell^1}\leq\|x\|_{\ell^1}-\|x^\dagger\|_{\ell^1}+\varphi\bigl(\|Ax-Ax^\dagger\|_Y\bigr)\quad\text{for all }x\in\ell^1.
\end{equation}
with some constant $\beta\in(0,1]$.
This variational source condition is known to imply the asserted convergence rate, see \cite{Fle12,HofMat12}.

We now prove validity of Condition~\ref{ass2} and derive a variational source condition.
In the next section, for the reader's convenience, we present the remaining steps to obtain the rates result in our notation.

\begin{definition}
Let $X$ be some Banach space and let $U$ and $V$ be subspaces of $X$ and $X^\ast$, respectively.
The annihilator of $U\subset X$ in $X^\ast$ is
\begin{equation*}
U^\perp:=\{\xi\in X^\ast:\la\xi,x\ra_{X^\ast\times X}=0\text{ for all $x\in U$}\}
\end{equation*}
and the annihilator of $V\subset X^\ast$ in $X$ is
\begin{equation*}
V_\perp:=\{x\in X:\la\xi,x\ra_{X^\ast\times X}=0\text{ for all $\xi\in V$}\}.
\end{equation*}
\end{definition}


With the help of annihilators we can carry over well known relations between null spaces and ranges of $A$ and $A^\ast$ in Hilbert spaces to Banach spaces. We need the following relation.

\begin{lemma}\label{th:rana}
Denoting by $A^{\ast\ast}:=(A^\ast)^\ast:(\ell^\infty)^\ast\rightarrow Y^{\ast\ast}$ we have
\begin{equation*}
\overline{\cR(A^\ast)}=\cN(A^{\ast\ast})_\perp.
\end{equation*}
\end{lemma}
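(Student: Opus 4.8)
The plan is to recognise Lemma~\ref{th:rana} as the Banach-space version of the familiar Hilbert-space identity $\overline{\cR(B)}=\cN(B^\ast)^\perp$, applied to the bounded operator $B:=A^\ast\colon Y^\ast\rightarrow\ell^\infty$, whose adjoint is $B^\ast=A^{\ast\ast}$. I would first reduce the claim to the annihilator identity $\cR(A^\ast)^\perp=\cN(A^{\ast\ast})$ and then establish $\overline{\cR(A^\ast)}=\bigl(\cR(A^\ast)^\perp\bigr)_\perp$ by proving two inclusions.

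For the annihilator identity, let $z\in(\ell^\infty)^\ast$. By definition of $A^{\ast\ast}$ we have $\la A^{\ast\ast}z,\eta\ra_{Y^{\ast\ast}\times Y^\ast}=\la z,A^\ast\eta\ra_{(\ell^\infty)^\ast\times\ell^\infty}$ for every $\eta\in Y^\ast$. Hence $z\in\cR(A^\ast)^\perp$, that is, $\la z,A^\ast\eta\ra=0$ for all $\eta\in Y^\ast$, is equivalent to $A^{\ast\ast}z=0$, that is, $z\in\cN(A^{\ast\ast})$. This gives $\cR(A^\ast)^\perp=\cN(A^{\ast\ast})$, so that it remains to show $\overline{\cR(A^\ast)}=\bigl(\cR(A^\ast)^\perp\bigr)_\perp$.

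The inclusion ``$\subseteq$'' is a consequence of continuity of the duality pairing: if $u\in\overline{\cR(A^\ast)}$, pick $\eta_j\in Y^\ast$ with $A^\ast\eta_j\to u$ in the $\ell^\infty$-norm; then for any $z\in\cR(A^\ast)^\perp$ one has $\la z,u\ra=\lim_{j\to\infty}\la z,A^\ast\eta_j\ra=0$, so $u\in\bigl(\cR(A^\ast)^\perp\bigr)_\perp$. For the reverse inclusion I argue by contraposition: if $u\in\ell^\infty\setminus\overline{\cR(A^\ast)}$, then $\overline{\cR(A^\ast)}$ is a closed linear subspace of $\ell^\infty$ not containing $u$, so the Hahn--Banach theorem yields a functional $z\in(\ell^\infty)^\ast$ with $z\equiv 0$ on $\overline{\cR(A^\ast)}$ and $\la z,u\ra\neq 0$. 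Then $z\in\cR(A^\ast)^\perp$ while $u\notin\bigl(\cR(A^\ast)^\perp\bigr)_\perp$, which proves ``$\supseteq$''.

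Chaining the two steps gives $\overline{\cR(A^\ast)}=\bigl(\cR(A^\ast)^\perp\bigr)_\perp=\cN(A^{\ast\ast})_\perp$, as claimed. The only step that is more than bookkeeping is the Hahn--Banach separation of a point from a closed subspace used in the ``$\supseteq$'' direction; I expect this to be the (mild) crux, everything else being an unwinding of the definitions of adjoint and annihilator.
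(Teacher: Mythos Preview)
Your argument is correct; the paper itself does not give a proof but simply cites a textbook (Megginson, Lemma~3.1.16 and Proposition~1.10.15(c)) for this standard identity, and what you have written is exactly the standard proof one finds there: the kernel--range annihilator relation $\cR(B)^\perp=\cN(B^\ast)$ combined with the Hahn--Banach consequence $\overline{M}=(M^\perp)_\perp$ for subspaces $M$ of a Banach space.
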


\begin{proof}
See, e.\,g., \cite[Lemma~3.1.16 and Proposition~1.10.15(c)]{Meg98}.
\end{proof}

To exploit the lemma we need information about the structure of $(\ell^\infty)^\ast$ which is not the same as $\ell^1$ but a strictly larger space, because $\ell^1$ is not reflexive. We have the following very useful characterization of $(\ell^\infty)^\ast$, which is a special case of \cite[Theorem~2.14]{Tak02}.

\begin{lemma}\label{th:l1c0}
Each element of $(\ell^\infty)^\ast$ is the sum of an element of $\ell^1$ and an element of $c_0^\perp$, that is,
\begin{equation*}
(\ell^\infty)^\ast=\ell^1\oplus c_0^\perp.
\end{equation*}
\end{lemma}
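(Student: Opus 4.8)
The statement to prove is Lemma~\ref{th:l1c0}, the decomposition $(\ell^\infty)^\ast=\ell^1\oplus c_0^\perp$. The plan is to exhibit the two summands explicitly and show that together they span $(\ell^\infty)^\ast$ with trivial intersection. First I would set up the natural candidate for the $\ell^1$-part: given $F\in(\ell^\infty)^\ast$, define $a=(a_k)_{k\in\bbN}$ by $a_k:=F(e^{(k)})$, where $e^{(k)}$ is the $k$-th canonical unit vector viewed as an element of $\ell^\infty$. The first task is to check $a\in\ell^1$; this follows by testing $F$ against the sequence $\xi$ with $\xi_k=\sgn(a_k)$ for $k\le N$ and $\xi_k=0$ otherwise, which has $\|\xi\|_{\ell^\infty}\le 1$, so $\sum_{k=1}^N|a_k|=F(\xi)\le\|F\|_{(\ell^\infty)^\ast}$ for every $N$, whence $\|a\|_{\ell^1}\le\|F\|_{(\ell^\infty)^\ast}$.

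Next I would define $G:=F-\iota(a)$, where $\iota:\ell^1\hookrightarrow(\ell^\infty)^\ast$ is the canonical embedding given by $\iota(a)(x)=\sum_k a_k x_k$; note $\iota$ is well defined and isometric since $\ell^\infty=(\ell^1)^\ast$. By construction $G(e^{(k)})=0$ for every $k$, and by linearity $G$ vanishes on the linear span of the $e^{(k)}$, i.e.\ on the finitely supported sequences. Since the finitely supported sequences are $\ell^\infty$-dense in $c_0$ and $G$ is continuous, $G$ vanishes on all of $c_0$, so $G\in c_0^\perp$. This gives the decomposition $F=\iota(a)+G$ with $\iota(a)\in\ell^1$ and $G\in c_0^\perp$, establishing $(\ell^\infty)^\ast=\ell^1+c_0^\perp$.

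For directness of the sum, suppose $\iota(a)\in c_0^\perp$ for some $a\in\ell^1$. Then $a_k=\iota(a)(e^{(k)})=0$ for all $k$ since $e^{(k)}\in c_0$, so $a=0$; hence $\ell^1\cap c_0^\perp=\{0\}$ and the sum is direct. (Here I am identifying $\ell^1$ with its image $\iota(\ell^1)$ inside $(\ell^\infty)^\ast$, which is the standard abuse implicit in the statement.) The main obstacle, such as it is, is the density argument for $c_0$: one must be careful that $G$ is only asserted to annihilate $c_0$, not all of $\ell^\infty$, so the conclusion $G\in c_0^\perp$ is exactly the right one and no stronger claim is available — indeed $c_0^\perp$ is a genuinely infinite-dimensional space (it is isometric to $(\ell^\infty/c_0)^\ast$), reflecting the non-reflexivity of $\ell^1$. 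Everything else is a routine verification that the two pieces live in the claimed spaces and reconstruct $F$.
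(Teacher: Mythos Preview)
Your proof is correct and follows essentially the same approach as the paper: define the $\ell^1$-part via $a_k=F(e^{(k)})$, bound $\|a\|_{\ell^1}$ by testing $F$ against finitely supported sign sequences, and then check that the remainder $F-\iota(a)$ annihilates $c_0$ by density of the finitely supported sequences. You additionally supply the directness argument $\ell^1\cap c_0^\perp=\{0\}$, which the paper's proof omits but which is needed to justify the $\oplus$ in the statement.
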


\begin{proof}
Let $u\in(\ell^\infty)^\ast$. Set
\begin{equation*}
x_k:=\la u,e^{(k)}\ra_{(\ell^\infty)^\ast\times\ell^\infty}.
\end{equation*}
Then $x=(x_k)_{k\in\bbN}\in\ell^1$ because
\begin{align*}
\sum_{k=1}^n\vert x_k\vert
&=\sum_{k=1}^n(\sgn x_k)\la u,e^{(k)}\ra_{(\ell^\infty)^\ast\times\ell^\infty}
=\left\la u,\sum_{k=1}^n(\sgn x_k)\,e^{(k)}\right\ra_{(\ell^\infty)^\ast\times\ell^\infty}\\
&\leq\|u\|_{(\ell^\infty)^\ast}.
\end{align*}
It remains to show $u-x\in c_0^\perp$. Indeed, for each $\xi\in c_0$ we have
\begin{align*}
\la u-x,\xi\ra_{(\ell^\infty)^\ast\times\ell^\infty}
&=\lim_{n\to\infty}\left\la u-x,\sum_{k=1}^n\xi_k\,e^{(k)}\right\ra_{(\ell^\infty)^\ast\times\ell^\infty}\\
&=\lim_{n\to\infty}\sum_{k=1}^n\left(\xi_k\,\la u,e^{(k)}\ra_{(\ell^\infty)^\ast\times\ell^\infty}-\xi_k\,\la x,e^{(k)}\ra_{\ell^1\times\ell^\infty}\right)\\
&=0.
\end{align*}
\end{proof}

Combining this result with Lemma \ref{th:rana} yields a full characterization of $\overline{\cR(A^\ast)}$.

\begin{proposition}\label{th:radense}
Let $A$ be injective and sequentially weak*-to-weak continuous. Then
\begin{equation*}
\overline{\cR(A^\ast)}=c_0.
\end{equation*}
\end{proposition}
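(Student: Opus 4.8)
The plan is to combine the two preceding lemmas. By Lemma~\ref{th:rana}, we have $\overline{\cR(A^\ast)}=\cN(A^{\ast\ast})_\perp$, so it suffices to show that $\cN(A^{\ast\ast})_\perp = c_0$ as subsets of $\ell^\infty$; equivalently, that the annihilator in $(\ell^\infty)^\ast$ of $A^{\ast\ast}$'s null space consists precisely of those functionals vanishing on the complement of $c_0$. The natural strategy is to identify $\cN(A^{\ast\ast})$ explicitly. Since $A$ is weak*-to-weak continuous, Lemma~(ii) (the cited result from \cite{Fle16}) gives $\cR(A^\ast)\subseteq c_0$, hence $\overline{\cR(A^\ast)}\subseteq c_0$, so one inclusion is for free. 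The real content is the reverse inclusion $c_0\subseteq\overline{\cR(A^\ast)}$.

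To get the reverse inclusion, I would work with the decomposition $(\ell^\infty)^\ast=\ell^1\oplus c_0^\perp$ from Lemma~\ref{th:l1c0}. Take $u\in\cN(A^{\ast\ast})$ and split $u=x+w$ with $x\in\ell^1$ and $w\in c_0^\perp$. The key computation: for $\eta\in Y^\ast$, $\la A^{\ast\ast}u,\eta\ra = \la u, A^\ast\eta\ra$, and since $A^\ast\eta\in c_0$, the $c_0^\perp$-part $w$ contributes nothing, so $\la A^{\ast\ast}u,\eta\ra = \la x, A^\ast\eta\ra_{\ell^1\times\ell^\infty} = \la A x,\eta\ra$ (interpreting $x\in\ell^1$ via its canonical image). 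Thus $A^{\ast\ast}u = 0$ forces $Ax = 0$, and injectivity of $A$ gives $x=0$. Therefore $\cN(A^{\ast\ast}) \subseteq c_0^\perp$ (as a subspace of $(\ell^\infty)^\ast$). Conversely every $w\in c_0^\perp$ annihilates all of $c_0 \supseteq \cR(A^\ast)$, so $\la A^{\ast\ast}w,\eta\ra = \la w, A^\ast\eta\ra = 0$ for all $\eta$, giving $w\in\cN(A^{\ast\ast})$. Hence $\cN(A^{\ast\ast}) = c_0^\perp$ exactly.

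Finally I would compute the annihilator: $\cN(A^{\ast\ast})_\perp = (c_0^\perp)_\perp$. For a closed subspace $c_0$ of the space $\ell^\infty$, viewed inside $(\ell^\infty)^{\ast\ast}$... but here $c_0\subseteq\ell^\infty$ is not the natural setup; rather $c_0^\perp\subseteq(\ell^\infty)^\ast$ and its pre-annihilator in $\ell^\infty$ is by definition $\{x\in\ell^\infty : \la\xi,x\ra=0\ \forall\xi\in c_0^\perp\}$. Since $c_0$ is a closed subspace of $\ell^\infty$, the bipolar/annihilator theorem gives $(c_0^\perp)_\perp = \overline{c_0} = c_0$. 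Combining with Lemma~\ref{th:rana} yields $\overline{\cR(A^\ast)} = c_0$, as claimed. I expect the main subtlety to be bookkeeping the dualities correctly — in particular making sure the pairing $\la u, A^\ast\eta\ra$ with $u\in(\ell^\infty)^\ast$ and $A^\ast\eta\in c_0\subseteq\ell^\infty$ is handled consistently, and that the identification of $(c_0^\perp)_\perp$ with $c_0$ (rather than with some larger or smaller space) is justified by closedness of $c_0$ in $\ell^\infty$; both injectivity and weak*-to-weak continuity are used exactly once each, which is a good sanity check.
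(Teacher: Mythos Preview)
Your proof is correct and follows essentially the same route as the paper: both use Lemma~\ref{th:rana} to reduce to computing $\cN(A^{\ast\ast})$, then exploit the decomposition $(\ell^\infty)^\ast=\ell^1\oplus c_0^\perp$ from Lemma~\ref{th:l1c0} together with $\cR(A^\ast)\subseteq c_0$ and injectivity of $A$ to show $\cN(A^{\ast\ast})=c_0^\perp$, and finally invoke $(c_0^\perp)_\perp=c_0$ via Hahn--Banach. The only difference is organizational---the paper computes $A^{\ast\ast}\vert_{\ell^1}=A$ and $A^{\ast\ast}\vert_{c_0^\perp}=0$ separately, while you decompose a general $u\in\cN(A^{\ast\ast})$ and argue both inclusions directly.
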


\begin{proof}
From Lemma~\ref{th:l1c0} we we know that $A^{\ast\ast}$ maps $\ell^1\oplus c_0^\perp$ into $Y^{\ast\ast}$.
On the one hand, for each $x\in\ell^1$ and each $\eta\in Y^\ast$ we see
\begin{align*}
\la A^{\ast\ast}\,x,\eta\ra_{Y^{\ast\ast}\times Y^\ast}
&=\la x,A^\ast\,\eta\ra_{(\ell^\infty)^\ast\times\ell^\infty}
=\la x,A^\ast\,\eta\ra_{\ell^1\times\ell^\infty}
=\la A\,x,\eta\ra_{Y\times Y^\ast}\\
&=\la A\,x,\eta\ra_{Y^{\ast\ast}\times Y^\ast},
\end{align*}
that is, $A^{\ast\ast}\vert_{\ell^1}=A$. If $A$ is injective we have $\cN(A|_{\ell^1})=\{0\}$.
On the other hand, for each $u\in c_0^\perp$ and each $\eta\in Y^\ast$ we see
\begin{equation*}
\la A^{\ast\ast}\,u,\eta\ra_{Y^{\ast\ast}\times Y^\ast}
=\la u,A^\ast\,\eta\ra_{(\ell^\infty)^\ast\times\ell^\infty}
=0
\end{equation*}
because $A^\ast\,\eta\in\cR(A^\ast)\subseteq c_0$ as a consequence of weak*-to-weak continuity (cf.\ \cite[Lemma~2.1]{Fle16}).
Thus, $A^{\ast\ast}\vert_{c_0^\perp}=0$ and together with $\cN(A|_{\ell^1})=\{0\}$ it is $\cN(A^{\ast\ast})=c_0^\perp$.
\par
With Lemma~\ref{th:rana} we now obtain
\begin{equation*}
\overline{\cR(A^\ast)}=\cN(A^{\ast\ast})_\perp=(c_0^\perp)_\perp=c_0,
\end{equation*}
where the last equality is a consequence of the Hahn-Banach Theorem (cf.\ \cite[Proposition~1.10.15]{Meg98}).
\end{proof}

The converse, that $\overline{\cR(A^\ast)}=c_0$ implies injectivity, is in general not true, because one can show that $A$ is injective if and only if the weak*-closure of $\cR(A^\ast)$ coincides with $\ell^\infty$ (see \cite[Theorem~3.1.17(a)]{Meg98}).

Verification of Condition~\ref{ass2} will be completed by a corollary of the following proposition.

\begin{proposition}\label{th:ra}
Let $\varepsilon>0$ and let $n\in\bbN$. Then for each $\xi\in c_0$ there exists $\tilde{\xi}\in\cR(A^\ast)$ such that
\begin{equation*}
\tilde{\xi}_k=\xi_k\quad\text{for $k\leq n$}\qquad\text{and}\qquad\vert\tilde{\xi}_k-\xi_k\vert\leq\varepsilon\quad\text{for $k>n$}.
\end{equation*}
\end{proposition}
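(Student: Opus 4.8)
The plan is to exploit Proposition~\ref{th:radense}, which tells us that $\overline{\cR(A^\ast)}=c_0$, and to upgrade the approximation statement $\xi\in\overline{\cR(A^\ast)}$ into the stronger form demanded here, namely that the first $n$ coordinates can be matched \emph{exactly} while the tail is controlled by $\varepsilon$. The key observation is that $\overline{\cR(A^\ast)}=c_0$ already gives, for $\xi\in c_0$, an element $\zeta\in\cR(A^\ast)$ with $\|\zeta-\xi\|_{\ell^\infty}\leq\varepsilon'$ for any prescribed $\varepsilon'>0$; this handles the tail estimate for $k>n$ directly (with $\varepsilon'\leq\varepsilon$), but only gives $|\zeta_k-\xi_k|\leq\varepsilon'$ for $k\leq n$ rather than equality. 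So the real task is to correct the first $n$ coordinates without destroying the tail bound.

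For the correction I would first establish that $\cR(A^\ast)$ contains, for each $n$, elements that are \emph{arbitrarily close to} $e^{(j)}$ for $j\leq n$ (again from density), and then combine finitely many such elements to build, for each $j\le n$, an element $w^{(j)}\in\cR(A^\ast)$ whose restriction to the first $n$ coordinates is exactly $e^{(j)}$ with tail as small as desired. This finite-dimensional correction is a linear-algebra step: consider the map $P_n:\cR(A^\ast)\to\R^n$ sending $\tilde\xi\mapsto(\tilde\xi_1,\dots,\tilde\xi_n)$. Density of $\cR(A^\ast)$ in $c_0$ forces $P_n(\cR(A^\ast))$ to be dense in $\R^n$, hence (being a linear subspace of a finite-dimensional space) equal to all of $\R^n$. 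Thus for the target vector $(\xi_1-\zeta_1,\dots,\xi_n-\zeta_n)$ there is $\rho\in\cR(A^\ast)$ with $P_n\rho=(\xi_1-\zeta_1,\dots,\xi_n-\zeta_n)$ exactly. The element $\rho$ itself need not have small tail, but here one uses a second density argument: one can choose $\rho$ with $\|(I-P_n)\rho\|_{\ell^\infty}$ as small as we like, because the preimage under $P_n$ of a fixed point of $\R^n$ is a coset of $\cN(P_n|_{\cR(A^\ast)})$, and — by density of $\cR(A^\ast)$ in $c_0$ and the fact that $\{x\in c_0:P_nx=0\}$ is the $\ell^\infty$-closure of $\cN(P_n|_{\cR(A^\ast)})$ — that coset comes arbitrarily close to the point $0$ in the tail coordinates. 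Setting $\tilde\xi:=\zeta+\rho$ with both $\|\zeta-\xi\|_{\ell^\infty}$ and $\|(I-P_n)\rho\|_{\ell^\infty}$ bounded by $\varepsilon/2$ gives $\tilde\xi_k=\xi_k$ for $k\leq n$ and $|\tilde\xi_k-\xi_k|\leq\varepsilon$ for $k>n$, as required.

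The main obstacle is the second density step: ensuring that the correcting element $\rho$ can be taken with small tail. It is tempting to think $P_n(\cR(A^\ast))=\R^n$ alone suffices, but a badly chosen preimage could have a huge tail. The clean way around this is to fix once and for all (using $P_n(\cR(A^\ast))=\R^n$) a basis $w^{(1)},\dots,w^{(n)}\in\cR(A^\ast)$ with $P_n w^{(j)}=e^{(j)}$, and then replace each $w^{(j)}$ by $w^{(j)}-v^{(j)}$ where $v^{(j)}\in\cR(A^\ast)$ approximates the \emph{tail} $(I-P_n)w^{(j)}\in c_0$ within $\varepsilon/(2n\max_j|\xi_j-\zeta_j|)$ in $\ell^\infty$ — possible by density of $\cR(A^\ast)$ in $c_0$ — and which additionally has $P_n v^{(j)}=0$; this last requirement is again met by the finite-dimensional argument, since one may first approximate $(I-P_n)w^{(j)}$ by an element of $\cR(A^\ast)$ and then subtract off a further element of $\cR(A^\ast)$ realizing its first $n$ coordinates exactly (this recursion terminates because the first $n$ coordinates of the correction shrink). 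Assembling $\rho:=\sum_{j=1}^n(\xi_j-\zeta_j)(w^{(j)}-v^{(j)})$ then yields $P_n\rho=(\xi_1-\zeta_1,\dots,\xi_n-\zeta_n)$ and small tail, completing the construction. One should double-check that all the constants combine to give a tail bound $\leq\varepsilon$; this is routine once the finitely many approximation tolerances are chosen small enough.
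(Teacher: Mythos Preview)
Your approach is correct and genuinely different from the paper's. The paper argues by induction on $n$: to pass from matching $m$ coordinates to matching $m+1$, it shifts the $(m+1)$-th coordinate of the target $\xi$ by $\pm\varepsilon$, applies the induction hypothesis to both shifted targets, and then takes a convex combination of the two resulting range elements to hit $\xi_{m+1}$ exactly; convexity preserves both the previously matched coordinates and the uniform $\varepsilon$-bound on the tail, with no accumulation of error. Your argument instead exploits the finite-dimensional observation that $P_n(\cR(A^\ast))$, being a linear subspace of $\R^n$ that is dense in $\R^n$, must equal all of $\R^n$; this furnishes a basis $w^{(1)},\dots,w^{(n)}\in\cR(A^\ast)$ with $P_nw^{(j)}=e^{(j)}$, from which a single additive correction fixes the first $n$ coordinates after a preliminary density approximation. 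The paper's route is more elementary (only convex combinations, no basis bookkeeping) and keeps the same $\varepsilon$ throughout; yours makes the finite-codimension structure explicit and would transfer more readily to analogous situations.

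One point of exposition: the ``recursion'' you allude to is unnecessary and obscures an otherwise clean argument. Once $w^{(1)},\dots,w^{(n)}$ are fixed with $M:=\max_j\|(I-P_n)w^{(j)}\|_{\ell^\infty}$, simply choose $\zeta\in\cR(A^\ast)$ with $\|\zeta-\xi\|_{\ell^\infty}\le\varepsilon/(1+nM)$ and set
\[
\tilde\xi:=\zeta+\sum_{j=1}^n(\xi_j-\zeta_j)\,w^{(j)}\in\cR(A^\ast).
\]
Then $P_n\tilde\xi=P_n\xi$, and for $k>n$ one has $|\tilde\xi_k-\xi_k|\le|\zeta_k-\xi_k|+\sum_{j=1}^n|\xi_j-\zeta_j|\,|w^{(j)}_k|\le\varepsilon$. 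No iterated correction is needed; this also avoids your tolerance $\varepsilon/(2n\max_j|\xi_j-\zeta_j|)$, which is ill-defined in the (harmless but annoying) case $P_n\zeta=P_n\xi$.
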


\begin{proof}
We proof the proposition by induction with respect to $n$. For $\xi\in c_0$ set
\begin{equation*}
\xi^+:=(\xi_1+\varepsilon,\xi_2,\xi_3,\ldots)\qquad\text{and}\qquad\xi^-:=(\xi_1-\varepsilon,\xi_2,\xi_3,\ldots).
\end{equation*}
By Proposition~\ref{th:radense} we find $\tilde{\xi}^+\in\cR(A^\ast)$ and $\tilde{\xi}^-\in\cR(A^\ast)$ with
\begin{equation*}
\|\tilde{\xi}^+-\xi^+\|_{\ell^\infty}\leq\varepsilon\qquad\text{and}\qquad\|\tilde{\xi}^--\xi^-\|_{\ell^\infty}\leq\varepsilon.
\end{equation*}
Consequently, $\tilde{\xi}^+_1\geq\xi_1\geq\tilde{\xi}^-_1$ and $\vert\tilde{\xi}^+_k-\xi_k\vert\leq\varepsilon$ as well as $\vert\tilde{\xi}^-_k-\xi_k\vert\leq\varepsilon$ for $k>1$. Thus we find a convex combination $\tilde{\xi}$ of $\tilde{\xi}^+$ and $\tilde{\xi}^-$ such that $\tilde{\xi}_1=\xi_1$. This $\tilde{\xi}$ obviously also satisfies $\vert\tilde{\xi}_k-\xi_k\vert\leq\varepsilon$ for $k>1$, which proves the proposition for $n=1$.
\par
Now let the proposition be true for $n=m$. We prove it for $n=m+1$.
Let $\xi\in c_0$ and set
\begin{align*}
\xi^+&:=(\xi_1,\ldots,\xi_m,\xi_{m+1}+\varepsilon,\xi_{m+2},\xi_{m+3},\ldots),\\
\xi^-&:=(\xi_1,\ldots,\xi_m,\xi_{m+1}-\varepsilon,\xi_{m+2},\xi_{m+3},\ldots).
\end{align*}
By the induction hypothesis we find $\tilde{\xi}^+\in\cR(A^\ast)$ and $\tilde{\xi}^-\in\cR(A^\ast)$ with 
\begin{equation*}
\tilde{\xi}^+_k=\xi_k=\tilde{\xi}^-_k\quad\text{for $k\leq m$}
\end{equation*}
and
\begin{equation*}
\vert\tilde{\xi}^+_k-\xi^+_k\vert\leq\varepsilon\quad\text{and}\quad\vert\tilde{\xi}^-_k-\xi^-_k\vert\leq\varepsilon\quad\text{for $k>m$}.
\end{equation*}
Consequently, $\tilde{\xi}^+_{m+1}\geq\xi_{m+1}\geq\tilde{\xi}^-_{m+1}$ and $\vert\tilde{\xi}^+_k-\xi_k\vert\leq\varepsilon$ as well as $\vert\tilde{\xi}^-_k-\xi_k\vert\leq\varepsilon$ for $k>m+1$. Thus we find a convex combination $\tilde{\xi}$ of $\tilde{\xi}^+$ and $\tilde{\xi}^-$ such that $\tilde{\xi}_{m+1}=\xi_{m+1}$. This $\tilde{\xi}$ obviously also satisfies $\tilde{\xi}_k=\xi_k$ for $k<m+1$ and $\vert\tilde{\xi}_k-\xi_k\vert\leq\varepsilon$ for $k>m+1$, which proves the proposition for $n=m+1$.
\end{proof}

\begin{corollary}\label{th:cond}
Let $A$ be injective and sequentially weak*-to-weak continuous. Then for each $\mu\in(0,1)$ there is a sequence $(\gamma_n)_{n\in\bbN}$ such that Condition~\ref{ass2} is satisfied.
\end{corollary}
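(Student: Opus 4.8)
The plan is to read off the required functional $\eta$ directly from Proposition~\ref{th:ra} and then to define $\gamma_n$ by a finiteness argument over the admissible sign patterns.

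Fix $\mu\in(0,1)$ and $n\in\bbN$. Write $S_n$ for the (finite) set of all $\xi\in\ell^\infty$ with $\xi_k\in\{-1,0,1\}$ for $k\leq n$ and $\xi_k=0$ for $k>n$; note that $S_n$ has exactly $3^n$ elements. Given $\xi\in S_n$, this $\xi$ is finitely supported, hence $\xi\in c_0$, so Proposition~\ref{th:ra} applies with $\varepsilon:=\mu$ and produces some $\tilde{\xi}\in\cR(A^\ast)$ with $\tilde{\xi}_k=\xi_k$ for $k\leq n$ and $\vert\tilde{\xi}_k-\xi_k\vert\leq\mu$ for $k>n$. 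Since $\xi_k=0$ for $k>n$, the latter reads $\vert\tilde{\xi}_k\vert\leq\mu$ for $k>n$. Because $\tilde{\xi}\in\cR(A^\ast)$ we may choose $\eta=\eta(\mu,n,\xi)\in Y^\ast$ with $A^\ast\eta=\tilde{\xi}$. Then $P_nA^\ast\eta=P_n\tilde{\xi}=\xi$, which is item~(i) of Condition~\ref{ass2}, and $\vert[(I-P_n)A^\ast\eta]_k\vert=\vert\tilde{\xi}_k\vert\leq\mu$ for $k>n$, which is item~(ii).

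For item~(iii) the point is that, with $n$ fixed, only finitely many $\xi$ need to be considered. Having fixed one admissible $\eta(\mu,n,\xi)$ for each $\xi\in S_n$ as above, set
\[
\gamma_n:=\max_{\xi\in S_n}\|\eta(\mu,n,\xi)\|_{Y^\ast},
\]
which is finite because $S_n$ is finite. Then $\|\eta(\mu,n,\xi)\|_{Y^\ast}\leq\gamma_n$ for every $\xi\in S_n$, so item~(iii) holds as well, and Condition~\ref{ass2} is satisfied with this sequence $(\gamma_n)_{n\in\bbN}$ and the prescribed $\mu$. If desired, replacing $\gamma_n$ by $\max\{\gamma_1,\dots,\gamma_n\}$ (which only enlarges the bound in~(iii)) turns the sequence into a monotonically increasing one, as used in Theorem~\ref{th:main}; it is positive since, for instance, the pattern $\xi$ with $\xi_1=1$ and all other entries zero lies in every $S_n$ and forces $\tilde{\xi}_1=1$, hence $\eta\neq 0$.

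I do not expect a real obstacle once Proposition~\ref{th:ra} is in hand: the only steps needing a moment's attention are the identification $\varepsilon=\mu$ in that proposition and the observation that, because the tail of $\xi$ is zero, the approximation bound from Proposition~\ref{th:ra} is precisely estimate~(ii) demanded by Condition~\ref{ass2}; the uniform bound~(iii) then comes for free from the finiteness of the set of sign patterns for each fixed $n$.
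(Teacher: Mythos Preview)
Your proof is correct and follows the same route as the paper: apply Proposition~\ref{th:ra} with $\varepsilon=\mu$ to obtain items~(i) and~(ii), then get~(iii) from the fact that only finitely many sign patterns $\xi$ occur for each fixed $n$. Your explicit use of $\lvert S_n\rvert=3^n$ and $\gamma_n:=\max_{\xi\in S_n}\|\eta(\mu,n,\xi)\|_{Y^\ast}$ is in fact a cleaner justification of~(iii) than the paper's somewhat loosely worded preimage argument, and your remark on passing to a monotone positive sequence is a welcome addition.
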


\begin{proof}
Fix $\mu$ and $n$ and take some $\xi$ as described in Condition~\ref{ass2}.
By Proposition~\ref{th:ra} with $\varepsilon:=\mu$ there exists some $\eta$ such that $A^\ast\eta$ ($=\tilde{\xi}$ in the proposition) satisfies items (i) and (ii) in the condition.
\par
The set of all $\xi$ to be considered in Condition~\ref{ass2} for fixed $n$ is a bounded subset of a finite-dimensional subspace of $\ell^\infty$ and the set of corresponding $\eta=\eta(\xi)$ is contained in the preimage of this finite-dimensional subset with respect to the linear mapping $A^\ast$. Thus, the set of $\eta$ is bounded, too. Choosing $\gamma_n$ to be this bound we automatically satisfy item $(iii)$ in the condition.
\end{proof}

To obtain a variational source condition \eqref{eq:vsc} from Condition~\ref{ass2}, which is always satisfied under our standing assumptions, we use the estimates from \cite[proof of Theorem~2.2]{FleHofVes16}.

\begin{corollary}
Let $A$ be injective and sequentially weak*-to-weak continuous and let $\mu$ and $(\gamma_n)_{n\in\bbN}$ be as in Condition~\ref{ass2}. Then a variational source condition \eqref{eq:vsc} with $\beta=\frac{1-\mu}{1+\mu}$ and $\varphi$ given by \eqref{eq:phi} is fulfilled.
\end{corollary}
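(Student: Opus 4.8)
The plan is to carry out explicitly, in the present notation, the estimate leading from a source-type condition to a variational inequality (the estimate referred to as \cite[proof of Theorem~2.2]{FleHofVes16}). Fix $x\in\ell^1$, abbreviate $D:=\|Ax-Ax^\dagger\|_Y$, and for each $n\in\bbN$ split the error into head and tail, $S_n:=\sum_{k=1}^n\vert x_k-x^\dagger_k\vert$ and $T_n:=\sum_{k=n+1}^\infty\vert x_k-x^\dagger_k\vert$, so that $\|x-x^\dagger\|_{\ell^1}=S_n+T_n$. The first step is to control the head $S_n$ via the source element. Take $\xi\in\ell^\infty$ with $\xi_k:=\sgn(x_k-x^\dagger_k)$ for $k\le n$ and $\xi_k:=0$ for $k>n$; with the convention $\sgn 0=0$ this $\xi$ is admissible in Condition~\ref{ass2}, so there is $\eta\in Y^\ast$ with $P_nA^\ast\eta=\xi$, $\vert[A^\ast\eta]_k\vert\le\mu$ for $k>n$, and $\|\eta\|_{Y^\ast}\le\gamma_n$. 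Then
\[
S_n=\la\xi,x-x^\dagger\ra_{\ell^\infty\times\ell^1}=\la A^\ast\eta,x-x^\dagger\ra-\sum_{k>n}[A^\ast\eta]_k(x_k-x^\dagger_k)=\la\eta,A(x-x^\dagger)\ra_{Y^\ast\times Y}-\sum_{k>n}[A^\ast\eta]_k(x_k-x^\dagger_k),
\]
and estimating the two terms by (iii) and (ii), respectively, yields $S_n\le\gamma_n D+\mu T_n$.

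The second step links the tail $T_n$ to the penalty gain $R:=\|x\|_{\ell^1}-\|x^\dagger\|_{\ell^1}$. Using $\vert x_k\vert\ge\vert x_k-x^\dagger_k\vert-\vert x^\dagger_k\vert$ for $k>n$ and $\vert x_k\vert\ge\vert x^\dagger_k\vert-\vert x_k-x^\dagger_k\vert$ for $k\le n$, summation gives $R\ge T_n-2\sum_{k>n}\vert x^\dagger_k\vert-S_n$, that is, $T_n\le R+S_n+2\sum_{k>n}\vert x^\dagger_k\vert$.

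The third step is bookkeeping. Substituting the first bound into the second and solving for $T_n$ (here $\mu<1$ enters) gives $T_n\le(R+\gamma_n D+2\sum_{k>n}\vert x^\dagger_k\vert)/(1-\mu)$; back-substituting bounds $S_n+T_n$, and multiplying by $\beta=\frac{1-\mu}{1+\mu}\le1$ gives, after a short computation,
\[
\beta\|x-x^\dagger\|_{\ell^1}\le R+2\Bigl(\gamma_n D+\sum_{k>n}\vert x^\dagger_k\vert\Bigr).
\]
Since $n\in\bbN$ was arbitrary, taking the infimum over $n$ turns the right-hand side into $R+\varphi(D)$ with $\varphi$ as in \eqref{eq:phi}, which is the claimed variational source condition \eqref{eq:vsc} with $\beta=\frac{1-\mu}{1+\mu}$.

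I do not anticipate a real obstacle: the whole argument is elementary manipulation of the triangle inequality together with a $2\times2$ linear system in $S_n$ and $T_n$. The only points needing a little care are the admissibility of the chosen $\xi$ in Condition~\ref{ass2} (its entries must lie in $\{-1,0,1\}$, which is exactly why $\sgn$ is the right choice) and the correct bookkeeping of the two dual pairings $\ell^\infty\times\ell^1$ and $Y^\ast\times Y$ when moving $A^\ast$ to the other side.
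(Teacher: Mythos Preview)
Your proof is correct and follows essentially the same approach as the paper: choose $\xi=\sgn P_n(x-x^\dagger)$, apply Condition~\ref{ass2} to bound the head by $\mu$ times the tail plus $\gamma_n D$, and then combine with a triangle-inequality estimate relating the tail to the penalty gain $\|x\|_{\ell^1}-\|x^\dagger\|_{\ell^1}$. The only cosmetic difference is that the paper further splits the tail via $\|(I-P_n)(x-x^\dagger)\|_{\ell^1}\le\|(I-P_n)x\|_{\ell^1}+\|(I-P_n)x^\dagger\|_{\ell^1}$ before doing the algebra, whereas you keep $T_n$ intact and solve the resulting $2\times2$ system explicitly; both routes yield the same bound $\beta\|x-x^\dagger\|_{\ell^1}\le R+2(\gamma_n D+\sum_{k>n}|x^\dagger_k|)$.
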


\begin{proof}
Fix $n\in\bbN$ and $x\in\ell^1$ and let $\xi:=\sgn P_n(x-x^\dagger)\in\ell^\infty$ be the sequence of signs of $P_n(x-x^\dagger)$.
Then by Condition~\ref{ass2} there is some $\eta$ such that
\begin{align*}
\lefteqn{\|P_n(x-x^\dag)\|_{\ell^1}
=\langle \xi, x-x^\dag\rangle_{\ell^\infty\times\ell^1}= \langle P_n A^\ast \eta, x-x^\dag\rangle_{\ell^\infty\times\ell^1}}\\
&\qquad=\langle P_n A^\ast \eta-A^\ast\eta, x-x^\dag\rangle_{\ell^\infty\times\ell^1}+\langle  A^\ast \eta, x-x^\dag\rangle_{\ell^\infty\times\ell^1}\\
&\qquad=-\langle (I-P_n)A^\ast\eta, (I-P_n)(x-x^\dag)\rangle_{\ell^\infty\times\ell^1}+\langle  A^\ast \eta, x-x^\dag\rangle_{\ell^\infty\times\ell^1}\\
&\qquad\leq \mu\|(I-P_n)(x-x^\dag)\|_{\ell^1}+\gamma_n\|Ax-Ax^\dag\|_Y.
\end{align*}
The triangle inequality yields
\begin{equation}\label{eq:pm}
\|P_n(x-x^\dagger)\|_{\ell^1}\leq \mu\bigl(\|(I-P_n)x\|_{\ell^1}+\|(I-P_n)x^\dagger\|_{\ell^1}\bigr)+\gamma_n\|Ax-Ax^\dagger\|_Y.
\end{equation}
Now
\begin{align*}
\lefteqn{\beta\|x-x^\dagger\|_{\ell^1}-\|x\|_{\ell^1}+\|x^\dagger\|_{\ell^1}}\\
&=\beta\|P_n(x-x^\dagger)\|_{\ell^1}+\beta\|(I-P_n)(x-x^\dagger)\|_{\ell^1}-\|P_nx\|_{\ell^1}-\|(I-P_n)x\|_{\ell^1}\\
&\quad+\|P_nx^\dagger\|_{\ell^1}+\|(I-P_n)x^\dagger\|_{\ell^1}
\end{align*}
together with
\begin{equation*}
\beta\|(I-P_n)(x-x^\dagger)\|_{\ell^1}\leq\beta\|(I-P_n)x\|_{\ell^1}+\beta\|(I-P_n)x^\dagger\|_{\ell^1}
\end{equation*}
and
\begin{equation*}
\|P_nx^\dagger\|_{\ell^1}=\|P_n(x-x^\dagger-x)\|_{\ell^1}\leq\|P_n(x-x^\dagger)\|_{\ell^1}+\|P_nx\|_{\ell^1}
\end{equation*}
shows
\begin{align*}
\lefteqn{\beta\|x-x^\dagger\|_{\ell^1}-\|x\|_{\ell^1}+\|x^\dagger\|_{\ell^1}}\\
&\qquad\leq 2\|(I-P_n)x^\dagger\|_{\ell^1}+(1+\beta)\|P_n(x-x^\dagger)\|_{\ell^1}\\
&\qquad\quad\,-(1-\beta)\bigl(\|(I-P_n)x\|_{\ell^1}+\|(I-P_n)x^\dagger\|_{\ell^1}\bigr).
\end{align*}
Combining this estimate with the previous estimate \eqref{eq:pm} and taking into account that $\beta=\frac{1-\mu}{1+\mu}$
and $\mu=\frac{1-\beta}{1+\beta}$ we obtain
\begin{align*}
\beta\|x-x^\dagger\|_{\ell^1}-\|x\|_{\ell^1}+\|x^\dagger\|_{\ell^1}
&\leq 2\|(I-P_n)x^\dagger\|_{\ell^1}+\frac{2}{1+\mu}\,\gamma_n\|Ax-Ax^\dagger\|_Y\\
&\leq 2\|(I-P_n)x^\dagger\|_{\ell^1}+2\gamma_n\|Ax-Ax^\dagger\|_Y.
\end{align*}
Taking the infimum over all $n\in\bbN$ completes the proof.
\end{proof}

Now that we arrived at a variational source condition we summarize some observations the subtle observer can make en route. 

\begin{proposition}\label{thm:main}
If $A$ is sequentially weak*-to-weak continuous, the following statements are equivalent:
\begin{itemize}
\item[(i)]
Condition~\ref{ass2} holds for each $\mu\in(0,1)$,
\item[(ii)]
$e^{(k)}\in\overline{\cR(A^\ast)}$ for all $k\in\N$,
\item[(iii)]
$\overline{\cR(A^\ast)}=c_0$,
\item[(iv)]
$A$ is injective.
\end{itemize}
\end{proposition}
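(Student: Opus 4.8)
The plan is to close the cycle of implications (iv) $\Rightarrow$ (i) $\Rightarrow$ (ii) $\Rightarrow$ (iii) $\Rightarrow$ (iv), which makes all four statements equivalent. Two of these four arrows are already available: (iv) $\Rightarrow$ (i) is precisely Corollary~\ref{th:cond}, and (ii) $\Rightarrow$ (iii) follows because sequential weak*-to-weak continuity gives $\cR(A^\ast)\subseteq c_0$ (cf.\ \cite[Lemma~2.1]{Fle16}), so that $\overline{\cR(A^\ast)}$ is a norm-closed subspace of $c_0$ which by (ii) contains every $e^{(k)}$, hence contains their closed linear span, which is all of $c_0$. It therefore remains to supply the two arrows (i) $\Rightarrow$ (ii) and (iii) $\Rightarrow$ (iv).

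For (i) $\Rightarrow$ (ii) I would fix $k\in\N$ and, for an arbitrary $\mu\in(0,1)$, apply Condition~\ref{ass2} with $n:=k$ and $\xi:=e^{(k)}$; this is an admissible choice of $\xi$ since its entries lie in $\{0,1\}$ for indices $\leq k$ and vanish for indices $>k$. Items (i) and (ii) of the condition then produce some $\eta\in Y^\ast$ with $(A^\ast\eta)_j=e^{(k)}_j$ for $j\leq k$ and $\lvert(A^\ast\eta)_j\rvert\leq\mu$ for $j>k$; since $e^{(k)}_j=0$ for $j>k$ this says exactly $\|A^\ast\eta-e^{(k)}\|_{\ell^\infty}\leq\mu$. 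Hence $\dist\bigl(e^{(k)},\cR(A^\ast)\bigr)\leq\mu$ for every $\mu\in(0,1)$, and letting $\mu\to 0$ yields $e^{(k)}\in\overline{\cR(A^\ast)}$.

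For (iii) $\Rightarrow$ (iv) I would argue directly with the duality pairing. Let $x\in\ell^1$ satisfy $Ax=0$. For every $\eta\in Y^\ast$ one has $\la x,A^\ast\eta\ra_{\ell^1\times\ell^\infty}=\la Ax,\eta\ra_{Y\times Y^\ast}=0$, so the bounded linear functional $\la x,\cdot\ra$ on $\ell^\infty$ vanishes on $\cR(A^\ast)$ and, being norm-continuous, also on the norm-closure $\overline{\cR(A^\ast)}=c_0$. Taking $\xi=e^{(k)}$ then shows $x_k=0$ for every $k$, hence $x=0$ and $A$ is injective.

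Both new steps are short, so I do not expect a serious obstacle; the point that requires care is the quantifier bookkeeping in (i) $\Rightarrow$ (ii): the phrase ``Condition~\ref{ass2} holds for each $\mu\in(0,1)$'' must be read as ``for every $\mu$ there is \emph{some} sequence $(\gamma_n)$, and, given $n$ and $\xi$, some $\eta$, making the condition true'', and it is the passage $\mu\to 0$ --- not any single fixed $\mu$ --- that forces $e^{(k)}$ into the closure of $\cR(A^\ast)$. Everything else is either already established in Sections~\ref{sc:basis} and~\ref{sc:proof} or is an elementary fact about norm closures in $\ell^\infty$, so the proposition is essentially an assembly of the preceding results.
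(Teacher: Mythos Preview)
Your proof is correct and follows essentially the same cycle (i)$\Rightarrow$(ii)$\Rightarrow$(iii)$\Rightarrow$(iv)$\Rightarrow$(i) as the paper, with the same arguments at each step. The only cosmetic difference is that in (i)$\Rightarrow$(ii) the paper fixes any $n\geq k$ while you take $n=k$, which is immaterial.
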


\begin{proof}
We show (i)$\Rightarrow$(ii)$\Rightarrow$(iii)$\Rightarrow$(iv)$\Rightarrow$(i).
\par
(i)$\Rightarrow$(ii): Fix $k$, fix $n\geq k$, take a sequence $(\mu_m)_{m\in\bbN}$ in $(0,1)$ with $\mu_m\to 0$ and choose $\xi:=e^{(k)}$ in Condition~\ref{ass2}.
Then for a corresponding sequence $(\eta_m)_{m\in\bbN}$ from Condition~\ref{ass2} we obtain
\begin{equation*}
\|e^{(k)}-A^\ast\eta_m\|_{\ell^\infty}
\leq\|e^{(k)}-P_nA^\ast\eta_m\|_{\ell^\infty}+\|(I-P_n)A^\ast\eta_m\|_{\ell^\infty}.
\end{equation*}
The first summand is zero by the choice of $\xi$ and the second summand is bounded by $\mu_m$. Thus, $\|e^{(k)}-A^\ast\eta_m\|_{\ell^\infty}\to 0$ if $m\to\infty$.
\par
(ii)$\Rightarrow$(iii):
$(e^{(k)})_{k\in\bbN}$ is a Schauder basis in $c_0$. Thus, $c_0\subseteq\overline{\cR(A^\ast)}$. In \cite[Lemma~2.1]{Fle16} we find that weak*-to-weak continuity implies $\cR(A^\ast)\subseteq c_0$ and hence also $\overline{\cR(A^\ast)}\subseteq c_0$.
\par
(iii)$\Rightarrow$(iv):
One easily shows that $\overline{\cR(A^\ast)}\subseteq\cN(A)^\perp$. Thus, $c_0\subseteq\cN(A)^\perp$. If we have some $x\in\ell^1$ with $Ax=0$, then for each $u\in c_0\subseteq\cN(A)^\perp$ we obtain
\begin{equation*}
\la x,u\ra_{\ell^1\times c_0}=\la u,x\ra_{\ell^\infty\times\ell^1}=0,
\end{equation*}
which is equivalent to $x=0$.
\par
(iv)$\Rightarrow$(i): See Corollary~\ref{th:cond}.
\end{proof}

Note that we have \eqref{eq:range_inclusion} if and only if Condition~\ref{ass2} holds with $\mu=0$. According to Proposition~\ref{thm:main} one might start with \eqref{eq:range_inclusion_closure} instead of Condition~\ref{ass2}. This is an obvious generalization of \eqref{eq:range_inclusion}.
While it is in general not easy to decide whether or not \eqref{eq:range_inclusion} holds, \eqref{eq:range_inclusion_closure} or, equivalently, injectivity of the operator $A$ can be verified easily.
The fulfillment of sequential weak*-to-weak continuity follows in practically all relevant cases from the construction of the problem, see the discussion in the first lines of Section~\ref{sc:basis}.

\section{Proof part II: convergence rates}\label{sc:proof2}

We now collect further proof pieces from the literature to provide the reader with a full proof of Theorem~\ref{th:main}. The missing part of the proof is the step from a variational source condition~\eqref{eq:vsc} to the error estimate in Theorem~\ref{th:main}.
For a priori chosen regularization parameter we follow the proof of \cite[Theorem~1]{HofMat12} (while improving constants slightly) and in case of the discrepancy principle we follow the arguments in \cite{Fle12}.

In the two proofs we exploit the properties of the function $\varphi$ in Theorem~\ref{th:main} several times. Simple calculations show that $t\mapsto\frac{\varphi(t)}{t}$ is decreasing. As a consequence we see that $\varphi(ct)\leq c\varphi(t)$ if $c\geq 1$. Both observations will be used without further notice.

\begin{proposition}
Let the variational source condition \eqref{eq:vsc} be satisfied and choose $\alpha$ in \eqref{eq:tikh} such that
\begin{equation*}
c_1\frac{\delta^p}{\varphi(\delta)}\leq\alpha\leq c_2\frac{\delta^p}{\varphi(\delta)}
\end{equation*}
with constants $c_1,c_2>0$.
Then
\begin{equation*}
\|x_\alpha^\delta-x^\dagger\|_{\ell^1}\leq\frac{1}{\beta}\left(1+\frac{1}{c_1}+(1+2c_2)^{\frac{1}{p-1}}\right)\varphi(\delta)\qquad\text{for all }\delta>0.
\end{equation*}
\end{proposition}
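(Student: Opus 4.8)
The plan is to run the standard Tikhonov-error argument driven by the variational source condition \eqref{eq:vsc}, estimating the two terms produced by the minimizing property of $x_\alpha^\delta$. First I would write down the defining inequality $T_\alpha^\delta(x_\alpha^\delta)\le T_\alpha^\delta(x^\dagger)$, i.e.
\begin{equation*}
\|Ax_\alpha^\delta-y^\delta\|_Y^p+\alpha\|x_\alpha^\delta\|_{\ell^1}\le\|Ax^\dagger-y^\delta\|_Y^p+\alpha\|x^\dagger\|_{\ell^1}\le\delta^p+\alpha\|x^\dagger\|_{\ell^1},
\end{equation*}
using $\|Ax^\dagger-y^\delta\|_Y=\|y^\dagger-y^\delta\|_Y\le\delta$. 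This yields two consequences: a bound on the discrepancy $\|Ax_\alpha^\delta-y^\delta\|_Y$ and the key inequality $\alpha\bigl(\|x_\alpha^\delta\|_{\ell^1}-\|x^\dagger\|_{\ell^1}\bigr)\le\delta^p-\|Ax_\alpha^\delta-y^\delta\|_Y^p\le\delta^p$.

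Next I would plug $x=x_\alpha^\delta$ into \eqref{eq:vsc} to get
\begin{equation*}
\beta\|x_\alpha^\delta-x^\dagger\|_{\ell^1}\le\|x_\alpha^\delta\|_{\ell^1}-\|x^\dagger\|_{\ell^1}+\varphi\bigl(\|Ax_\alpha^\delta-Ax^\dagger\|_Y\bigr),
\end{equation*}
and combine it with the previous inequality after dividing by $\alpha$, so that $\|x_\alpha^\delta\|_{\ell^1}-\|x^\dagger\|_{\ell^1}\le\delta^p/\alpha$. For the argument of $\varphi$ I would use the triangle inequality $\|Ax_\alpha^\delta-Ax^\dagger\|_Y\le\|Ax_\alpha^\delta-y^\delta\|_Y+\delta$, then bound $\|Ax_\alpha^\delta-y^\delta\|_Y$: from $\|Ax_\alpha^\delta-y^\delta\|_Y^p\le\delta^p+\alpha\bigl(\|x^\dagger\|_{\ell^1}-\|x_\alpha^\delta\|_{\ell^1}\bigr)$ and, crucially, from \eqref{eq:vsc} again the quantity $\|x^\dagger\|_{\ell^1}-\|x_\alpha^\delta\|_{\ell^1}\le\varphi(\|Ax_\alpha^\delta-Ax^\dagger\|_Y)-\beta\|x_\alpha^\delta-x^\dagger\|_{\ell^1}\le\varphi(\|Ax_\alpha^\delta-y^\delta\|_Y+\delta)$. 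Writing $s:=\|Ax_\alpha^\delta-y^\delta\|_Y$ and using $\alpha\le c_2\delta^p/\varphi(\delta)$ together with the concavity facts already granted in the excerpt ($\varphi(ct)\le c\varphi(t)$ for $c\ge1$, $t\mapsto\varphi(t)/t$ decreasing), this gives $s^p\le\delta^p+c_2\frac{\delta^p}{\varphi(\delta)}\varphi(s+\delta)\le\delta^p\bigl(1+c_2\frac{\varphi(s+\delta)}{\varphi(\delta)}\bigr)$. If $s\le\delta$ we are done trivially; if $s>\delta$ then $\varphi(s+\delta)\le\varphi(2s)\le2\varphi(s)\le 2\frac{\varphi(\delta)}{\delta}s$ (last step since $\varphi(t)/t$ decreasing and $s\ge\delta$... careful: decreasing means $\varphi(s)/s\le\varphi(\delta)/\delta$, so $\varphi(s)\le\frac{\varphi(\delta)}{\delta}s$), whence $s^p\le\delta^p+2c_2\delta^{p-1}s$, and dividing by $\delta^{p-1}$ gives $(s/\delta)^p\le 1+2c_2(s/\delta)$, so $s/\delta\le(1+2c_2)^{1/(p-1)}$ after a short elementary estimate on the scalar inequality $r^p\le 1+2c_2 r$. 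Hence $s\le(1+2c_2)^{1/(p-1)}\delta$.

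Finally I would assemble the pieces: in \eqref{eq:vsc} with $x=x_\alpha^\delta$, bound the first difference by $\delta^p/\alpha\le\varphi(\delta)/c_1$ (using $\alpha\ge c_1\delta^p/\varphi(\delta)$), and bound the $\varphi$-term by $\varphi(s+\delta)\le\varphi\bigl((1+(1+2c_2)^{1/(p-1)})\delta\bigr)\le(1+(1+2c_2)^{1/(p-1)})\varphi(\delta)$, again via $\varphi(ct)\le c\varphi(t)$. Dividing by $\beta$ yields
\begin{equation*}
\|x_\alpha^\delta-x^\dagger\|_{\ell^1}\le\frac1\beta\Bigl(\frac{1}{c_1}+1+(1+2c_2)^{\frac{1}{p-1}}\Bigr)\varphi(\delta),
\end{equation*}
which is the claimed estimate. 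The main obstacle is the self-referential control of the discrepancy $s=\|Ax_\alpha^\delta-y^\delta\|_Y$: it appears on both sides through $\varphi$, and closing the loop requires precisely the concavity/sublinearity properties of $\varphi$ plus the case split $s\le\delta$ versus $s>\delta$, together with the elementary bound on $r^p\le1+2c_2r$; everything else is bookkeeping. One should also double-check that the slightly sharper constants advertised ("improving constants slightly" relative to \cite{HofMat12}) come out of this particular ordering of the estimates, but no new idea is needed for that.
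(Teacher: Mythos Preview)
Your proposal is correct and follows essentially the same route as the paper's proof: the minimizing property combined with the variational source condition yields both the basic estimate and the self-referential inequality $s^p\le\delta^p+\alpha\varphi(s+\delta)$ for $s=\|Ax_\alpha^\delta-y^\delta\|_Y$, which is then closed via the same case split $s\le\delta$ versus $s>\delta$ using the concavity properties of $\varphi$ (the paper bounds $\frac{\varphi(s+\delta)}{s+\delta}\le\frac{\varphi(\delta)}{\delta}$ directly rather than going through $\varphi(2s)\le2\varphi(s)$, but this is cosmetic). Your ``elementary estimate'' on $r^p\le1+2c_2r$ for $r>1$ is exactly the paper's step $\delta^p\le\delta^{p-1}s$ followed by division by $s$, and the final assembly is identical.
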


\begin{proof}
Because $x_\alpha^\delta$ is a minimizer of \eqref{eq:tikh} we have
\begin{align*}
\|x_\alpha^\delta\|_{\ell^1}-\|x^\dagger\|_{\ell^1}
&=\frac{1}{\alpha}\bigl(T_\alpha^\delta(x_\alpha^\delta)-\alpha\|x^\dagger\|_{\ell^1}-\|Ax_\alpha^\delta-y^\delta\|_Y^p\bigr)\\
&\leq\frac{1}{\alpha}\bigl(\|Ax^\dagger-y^\delta\|_Y^p-\|Ax_\alpha^\delta-y^\delta\|_Y^p\bigr)\\
&\leq\frac{1}{\alpha}\bigl(\delta^p-\|Ax_\alpha^\delta-y^\delta\|_Y^p\bigr).
\end{align*}
and thus the variational source condition \eqref{eq:vsc} implies
\begin{equation}\label{eq:proof1}
\beta\|x_\alpha^\delta-x^\dagger\|_{\ell^1}
\leq\frac{1}{\alpha}\bigl(\delta^p-\|Ax_\alpha^\delta-y^\delta\|_Y^p\bigr)+\varphi\bigl(\|Ax_\alpha^\delta-Ax^\dagger\|_Y\bigr).
\end{equation}
Because $\beta\|x_\alpha^\delta-x^\dagger\|_{\ell^1}\geq 0$, we obtain
\begin{equation*}
\|Ax_\alpha^\delta-y^\delta\|_Y^p\leq\delta^p+\alpha\varphi\bigl(\|Ax_\alpha^\delta-Ax^\dagger\|_Y\bigr).
\end{equation*}
If $\|Ax_\alpha^\delta-y^\delta\|_Y\leq\delta$, then the triangle inequality, the properties of $\varphi$ and the parameter choice imply
\begin{equation*}
\|Ax_\alpha^\delta-y^\delta\|_Y^p
\leq\delta^p+\alpha\varphi(2\delta)
\leq\delta^p+2\alpha\varphi(\delta)
\leq(1+2c_2)\delta^p,
\end{equation*}
that is,
\begin{equation*}
\|Ax_\alpha^\delta-y^\delta\|_Y\leq(1+2c_2)^{\frac{1}{p}}\delta\leq(1+2c_2)^{\frac{1}{p-1}}\delta.
\end{equation*}
If, on the other hand, $\|Ax_\alpha^\delta-y^\delta\|_Y>\delta$, then
\begin{align*}
\|Ax_\alpha^\delta-y^\delta\|_Y^p
&\leq\delta^p+\alpha\varphi\bigl(\|Ax_\alpha^\delta-y^\delta\|_Y+\delta\bigr)\\
&=\delta^p+\alpha\frac{\varphi\bigl(\|Ax_\alpha^\delta-y^\delta\|_Y+\delta\bigr)}{\|Ax_\alpha^\delta-y^\delta\|_Y+\delta}\bigl(\|Ax_\alpha^\delta-y^\delta\|_Y+\delta\bigr)\\
&\leq\delta^p+\alpha\frac{\varphi(\delta)}{\delta}\bigl(\|Ax_\alpha^\delta-y^\delta\|_Y+\delta\bigr)\\
&\leq\delta^{p-1}\|Ax_\alpha^\delta-y^\delta\|_Y+2\alpha\frac{\varphi(\delta)}{\delta}\|Ax_\alpha^\delta-y^\delta\|_Y
\end{align*}
and thus,
\begin{equation*}
\|Ax_\alpha^\delta-y^\delta\|_Y
\leq\left(\delta^{p-1}+2\alpha\frac{\varphi(\delta)}{\delta}\right)^{\frac{1}{p-1}}
\leq(1+2c_2)^{\frac{1}{p-1}}\delta.
\end{equation*}
In both cases \eqref{eq:proof1} can be further estimated to obtain
\begin{align*}
\beta\|x_\alpha^\delta-x^\dagger\|_{\ell^1}
&\leq\frac{1}{\alpha}\bigl(\delta^p-\|Ax_\alpha^\delta-y^\delta\|_Y^p\bigr)+\varphi\bigl(\|Ax_\alpha^\delta-y^\delta\|_Y+\delta\bigr)\\
&\leq\frac{\delta^p}{\alpha}+\varphi\left(\left(1+(1+2c_2)^{\frac{1}{p-1}}\right)\delta\right)\\
&\leq\frac{\delta^p}{\alpha}+\left(1+(1+2c_2)^{\frac{1}{p-1}}\right)\varphi(\delta)
\end{align*}
and the lower bound for $\alpha$ leads to
\begin{equation*}
\beta\|x_\alpha^\delta-x^\dagger\|_{\ell^1}
\leq\frac{\varphi(\delta)}{c_1}+\left(1+(1+2c_2)^{\frac{1}{p-1}}\right)\varphi(\delta).\qedhere
\end{equation*}
\end{proof}

Note that in the proof we used arguments similar to the ones in \cite{HofMat12}, but made changes in the details leading to a better constant in the obtained error estimate. Corresponding estimates in \cite[Theorem~1]{HofMat12} lead to
\begin{equation*}
\|x_\alpha^\delta-x^\dagger\|_{\ell^1}\leq\frac{1}{\beta}\left(1+2(2+p)^{\frac{1}{p-1}}\right)\varphi(\delta),
\end{equation*}
which has a greater constant factor than our estimate. Our estimate with the parameter choice from \cite{HofMat12}, that is $c_1=c_2=1$, reads
\begin{equation*}
\|x_\alpha^\delta-x^\dagger\|_{\ell^1}\leq\frac{1}{\beta}\left(2+3^{\frac{1}{p-1}}\right)\varphi(\delta).
\end{equation*}

\begin{proposition}
Let the variational source condition \eqref{eq:vsc} be satisfied and choose $\alpha$ in \eqref{eq:tikh} according to the discrepancy principle \eqref{eq:dp}.
Then
\begin{equation*}
\|x_\alpha^\delta-x^\dagger\|_{\ell^1}\leq \frac{1+\tau}{\beta}\varphi(\delta)\qquad\text{for all }\delta>0.
\end{equation*}
\end{proposition}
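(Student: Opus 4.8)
The plan is to exploit minimality of $x_\alpha^\delta$ together with the \emph{lower} bound in the discrepancy principle to eliminate the penalty difference, and then feed the result into the variational source condition \eqref{eq:vsc}. In contrast to the a priori case, no case distinction and no smallness of $\delta$ will be needed.

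First I would use that $x_\alpha^\delta$ minimizes $T_\alpha^\delta$, so that $T_\alpha^\delta(x_\alpha^\delta)\le T_\alpha^\delta(x^\dagger)$; since $\|Ax^\dagger-y^\delta\|_Y\le\delta$ this gives
\[
\|Ax_\alpha^\delta-y^\delta\|_Y^p+\alpha\|x_\alpha^\delta\|_{\ell^1}\le\delta^p+\alpha\|x^\dagger\|_{\ell^1}.
\]
Now comes the key step: the discrepancy principle \eqref{eq:dp} supplies $\|Ax_\alpha^\delta-y^\delta\|_Y\ge\delta$, hence $\|Ax_\alpha^\delta-y^\delta\|_Y^p\ge\delta^p$, so after cancelling $\delta^p$ and dividing by $\alpha>0$ we obtain $\|x_\alpha^\delta\|_{\ell^1}\le\|x^\dagger\|_{\ell^1}$, i.e.\ $\|x_\alpha^\delta\|_{\ell^1}-\|x^\dagger\|_{\ell^1}\le 0$.

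Next I would insert $x=x_\alpha^\delta$ into the variational source condition \eqref{eq:vsc}; the nonpositive penalty difference drops, leaving
\[
\beta\|x_\alpha^\delta-x^\dagger\|_{\ell^1}\le\varphi\bigl(\|Ax_\alpha^\delta-Ax^\dagger\|_Y\bigr).
\]
Finally I estimate the argument of $\varphi$ by the triangle inequality and the \emph{upper} bound in \eqref{eq:dp},
\[
\|Ax_\alpha^\delta-Ax^\dagger\|_Y\le\|Ax_\alpha^\delta-y^\delta\|_Y+\|y^\delta-Ax^\dagger\|_Y\le\tau\delta+\delta=(1+\tau)\delta,
\]
and then use monotonicity of $\varphi$ together with the already noted inequality $\varphi(ct)\le c\varphi(t)$ for $c\ge 1$ (here $c=1+\tau$) to get $\varphi\bigl((1+\tau)\delta\bigr)\le(1+\tau)\varphi(\delta)$. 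Combining the two displays yields $\beta\|x_\alpha^\delta-x^\dagger\|_{\ell^1}\le(1+\tau)\varphi(\delta)$, which is the claim.

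There is essentially no obstacle here; the only thing one has to notice is that it is the discrepancy \emph{lower} bound that makes the penalty term disappear (the upper bound being used only at the very end to control $\|Ax_\alpha^\delta-Ax^\dagger\|_Y$). This structural compatibility of the discrepancy principle with variational source conditions is exactly what makes this proof so much shorter than the a priori one.
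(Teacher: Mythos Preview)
Your proof is correct and follows essentially the same approach as the paper: both use minimality together with the lower discrepancy bound to obtain $\|x_\alpha^\delta\|_{\ell^1}\le\|x^\dagger\|_{\ell^1}$, then feed this into \eqref{eq:vsc} and finish with the upper discrepancy bound and $\varphi(ct)\le c\varphi(t)$ for $c\ge 1$.
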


\begin{proof}
Because $x_\alpha^\delta$ is a minimizer of \eqref{eq:tikh} we have
\begin{align*}
\|x_\alpha^\delta\|_{\ell^1}-\|x^\dagger\|_{\ell^1}
&=\frac{1}{\alpha}\bigl(T_\alpha^\delta(x_\alpha^\delta)-\alpha\|x^\dagger\|_{\ell^1}-\|Ax_\alpha^\delta-y^\delta\|_Y^p\bigr)\\
&\leq\frac{1}{\alpha}\bigl(\|Ax^\dagger-y^\delta\|_Y^p-\|Ax_\alpha^\delta-y^\delta\|_Y^p\bigr)\\
&\leq\frac{1}{\alpha}\bigl(\delta^p-\|Ax_\alpha^\delta-y^\delta\|_Y^p\bigr).
\end{align*}
and taking into account the left-hand inequality in \eqref{eq:dp} we obtain
\begin{equation*}
\|x_\alpha^\delta\|_{\ell^1}-\|x^\dagger\|_{\ell^1}\leq 0.
\end{equation*}
The variational source condition \eqref{eq:vsc} thus implies
\begin{align*}
\beta\|x_\alpha^\delta-x^\dagger\|_{\ell^1}
&\leq\varphi\bigl(\|Ax_\alpha^\delta-Ax^\dagger\|_Y\bigr)
\leq\varphi\bigl(\|Ax_\alpha^\delta-y^\delta\|_Y+\delta\bigr)
\end{align*}
and the right-hand side in \eqref{eq:dp} yields
\begin{equation*}
\beta\|x_\alpha^\delta-x^\dagger\|_{\ell^1}
\leq\varphi\bigl((1+\tau)\delta\bigr)
\leq(1+\tau)\varphi(\delta).\qedhere
\end{equation*}
\end{proof}

\section{Remarks and open questions}

In the present paper we only consider the decay of the components of the solution $x^\dagger$ in their natural ordering $x^\dagger_1,x^\dagger_2,\ldots$. In \cite{FleHofVes16} a more general formulation was used. There the decay of the components after ordering them by size was considered, which may improve the error estimate for $\|x_\alpha^\delta-x^\dagger\|$ slightly. The same technique can be applied in the present paper, too, but to avoid notational intricateness we did not implement this feature.

Next to the considered a priori parameter choice and to the discrepancy principle also other parameter choice rules lead to the desired error estimate. For example the sequential discrepancy principle or the Lepski\u{\i} principle can be used (cf.\ \cite{HofMat12}).

An open question is whether the assumption that $A$ is injective can be dropped. The case of non-injective operators is of substantial interest in compressed sensing. Indeed, in \cite{Fle16} convergence rates for $\ell^1$-regularization were proven without the use of injectivity and also no finite basis injectivity or related properties were assumed. There, source-type conditions quite similar to Condition~\ref{ass2} were required and the question is whether those source-type conditions are always satisfied. Following the ideas of Section~\ref{sc:proof}, Proposition~\ref{th:radense} would state
\begin{equation*}
\overline{\cR(A^\ast)}=\cN(A)^\perp\cap c_0,
\end{equation*}
where $\cN(A)$ denotes the null space of $A$. But the proof of Proposition~\ref{th:ra} cannot be carried over directly to the non-injective case.
Perhaps, additional assumptions on the `angle' between $\cN(A)$ and the faces of the unit ball in $\ell^1$ are required.

Another open problem to be solved in future is the interplay between $\mu$ and $\gamma_n$ in Condition~\ref{ass2}. We know that there are situations which do not allow $\mu=0$ (cf.\ \cite{FleHeg14}), but on the other hand the condition holds for all $\mu\in(0,1)$ as we have shown. The $\gamma_n$ obviously depend on $\mu$ and we would like to know more about this dependence. In particular, we do not know whether $\mu$ influences the asymptotic behavior of the $\gamma_n$ if $n\to\infty$.

\section*{Acknowledgments}

We thank Bernd Hofmann (TU Chemnitz) for many valuable comments on a draft of this paper and for fruitful discussions on the subject.
Research was supported by DFG grants FL 832/1-2, HO 1454/8-2 and HO 1454/10-1.

\bibliography{l1without}

\begin{thebibliography}{10}

\bibitem{AnzHofRam13}
S.~W. Anzengruber, B.~Hofmann, and R.~Ramlau.
\newblock On the interplay of basis smoothness and specific range conditions
  occurring in sparsity regularization.
\newblock {\em Inverse Problems}, 29:125002 (21pp), 2013.

\bibitem{BreLor09}
K.~Bredies and D.~A. Lorenz.
\newblock {Regularization with non-convex separable constraints}.
\newblock {\em Inverse Problems}, 25(8):085011 (14pp), 2009.

\bibitem{BurFleHof13}
M.~Burger, J.~Flemming, and B.~Hofmann.
\newblock Convergence rates in {$\ell\sp 1$}-regularization if the sparsity
  assumption fails.
\newblock {\em Inverse Problems}, 29:025013 (16pp), 2013.

\bibitem{DauDefDem04}
I.~Daubechies, M.~Defrise, and C.~De~Mol.
\newblock {An Iterative Thresholding Algorithm for Linear Inverse Problems with
  a Sparsity Constraint}.
\newblock {\em Communications on Pure and Applied Mathematics},
  57(11):1413--1457, 2004.

\bibitem{EngHanNeu96}
H.~W. Engl, M.~Hanke, and A.~Neubauer.
\newblock {\em {Regularization of Inverse Problems}}.
\newblock Mathematics and Its Applications. Kluwer Academic Publishers,
  Dordrecht, 1996.

\bibitem{Fle12}
J.~Flemming.
\newblock {\em {G}eneralized {T}ikhonov {R}egularization and {M}odern
  {C}onvergence {R}ate {T}heory in {B}anach {S}paces}.
\newblock Shaker Verlag, Aachen, 2012.

\bibitem{Fle16}
J.~Flemming.
\newblock {Convergence rates for {$\ell^1$}-regularization without
  injectivity-type assumptions}.
\newblock {\em Inverse Problems}, 32(9):095001 (19pp), 2016.

\bibitem{FleHeg14}
J.~Flemming and M.~Hegland.
\newblock Convergence rates in $\ell^1$-regularization when the basis is not
  smooth enough.
\newblock {\em Appl. Anal.}, 94:464--476, 2015.

\bibitem{FleHofVes16}
J.~Flemming, B.~Hofmann, and I.~Veseli\'c.
\newblock A unified approach to convergence rates for $\ell^1$-regularization
  and lacking sparsity.
\newblock {\em J. Inverse Ill-Posed. Probl.}, 24:139--148, 2016.

\bibitem{Gra09}
M.~Grasmair.
\newblock Well-posedness and convergence rates for sparse regularization with
  sublinear {$\ell\sp q$} penalty term.
\newblock {\em Inverse Probl. Imaging}, 33:383--387, 2009.

\bibitem{Gra10c}
M.~Grasmair.
\newblock {Non-convex sparse regularisation}.
\newblock {\em Journal of Mathematical Analysis and Applications},
  365(1):19--28, 2010.

\bibitem{GraHalSch11}
M.~Grasmair, M.~Haltmeier, and O.~Scherzer.
\newblock Necessary and sufficient conditions for linear convergence of
  {$\ell\sp 1$}-regularization.
\newblock {\em Comm. Pure Appl. Math.}, 64:161--182, 2011.

\bibitem{HofMat12}
B.~Hofmann and P.~Math{\'e}.
\newblock Parameter choice in {B}anach space regularization under variational
  inequalities.
\newblock {\em Inverse Problems}, 28:104006 (17pp), 2012.

\bibitem{Lor08}
D.~A. Lorenz.
\newblock Convergence rates and source conditions for {T}ikhonov regularization
  with sparsity constraints.
\newblock {\em J. Inverse Ill-Posed Probl.}, 16:463--478, 2008.

\bibitem{Meg98}
R.~E. Megginson.
\newblock {\em {An Introduction to Banach Space Theory}}, volume 183 of {\em
  Graduate Texts in Mathematics}.
\newblock Springer, New York, 1998.

\bibitem{SchKalHofKaz12}
T.~Schuster, B.~Kaltenbacher, B.~Hofmann, and K.~S. Kazimierski.
\newblock {\em {R}egularization {M}ethods in {B}anach {S}paces}, volume~10 of
  {\em Radon Ser. Comput. Appl. Math.}
\newblock Walter de Gruyter, Berlin/Boston, 2012.

\bibitem{Tak02}
M.~Takesaki.
\newblock {\em {Theory of Operator Algebra I}}, volume 124 of {\em
  Encyclopaedia of Mathematical Sciences}.
\newblock Springer, Berlin Heidelberg New York, 2002.

\end{thebibliography}

\end{document}